\documentclass[12pt]{amsart} \usepackage{amssymb}
\usepackage[colorlinks=true, pdfstartview=FitV, linkcolor=blue, citecolor=blue, urlcolor=blue]{hyperref}
\usepackage{amscd} \usepackage{longtable}
%\usepackage[backend=biber,style=alphabetic,]{biblatex}
%\usepackage[backend=biber]{biblatex}
%\addbibresource{quellen.bib}
%%%%%%%%%%%%%%%%%%%%%%%%%%%%%%%%%%%%
%% Dimensions
\textwidth=1.4\textwidth \textheight=1.15\textheight
%\textwidth=1.42\textwidth
%\textheight=1.16\textheight
\calclayout

%% Numbers for lists

%% Numbering of theorems etc.
\numberwithin{equation}{subsection}

%%%%%%%%%%%%%%%%%%%%%%%%%%%%%%%%%%%%
\newtheorem*{theorem*}{Theorem}
\newtheorem*{corollary*}{Corollary}
\newtheorem{theorem}[subsection]{Theorem}
\newtheorem{lemma}[subsection]{Lemma}
\newtheorem{proposition}[subsection]{Proposition}
\newtheorem{corollary}[subsection]{Corollary}

\theoremstyle{definition}

\theoremstyle{remark} \newtheorem{remark}[subsection]{Remark}
\newtheorem{remarks}[subsection]{Remarks}

%%%%%%%%%%%%%%%%%%%%%%%%%%%%%%%%%%%

\def\im{\mathrm{i}}
\def\inv{^{-1}}

\def\today{\ifcase\month\or January\or
  February\or March\or April\or May\or June\or July\or August\or
  September\or October\or November\or December\fi \space\number\day,
  \number\year} \def\C{\mathbb C} 
\def\quot#1#2{{#1/\!\!/#2}}
\def\twist#1#2#3{#1\times^{#2}#3}
\def\c{^\C} 
\def\lie#1{\mathfrak{ #1}}

\def\liek{\lie k}
\def\liep{\lie p}

\def\liea{\lie a}
\def\lieg{\lie g}
\def\liem{\lie m}
\def\lieu{\lie u}

\def\R{{\mathbb R}}

\newcommand{\Ad}{\operatorname{Ad}}
\def\mup{\mu_{\liep}}
\def\mua{\mu_{\liea}}

\def\mup{\mu_{\liep}}

\def\Mup{\mathcal M_{\liep}}

\def\max{\operatorname{max}}
\def\grad{\operatorname{grad}}
%%%%%%%%%%%%%%%%%%%%%%%%%%
%  Title Stuff
%%%%%%%%%%%%%%%%%%%%%%%%%%%
\title{A structure theorem along fibers of extreme points of the momentum polytope}
\author{Peter Heinzner}
\address{Fakult\"at f\"ur Mathematik\\
  Ruhr Universit\"at Bochum\\
  Universit\"atsstrasse 150\\
  D - 44780 Bochum}
  \email{peter.heinzner@rub.de}
  \author{Christian Z\"oller}
  \address{Fakult\"at f\"ur Mathematik\\
  Ruhr Universit\"at Bochum\\
  Universit\"atsstrasse 150\\
}
\email{christian.zoeller@rub.de}
%\subjclass{??}

%%%%%%%%%%%%%%%%%%%%%%%%%%

\begin{document}

\begin{abstract}
Let $G$ be a complex reductive Lie group
acting on a compact K\"ahler manifold $X$ and assume that the action
of a maximal compact subgroup $K$ of $G$ is Hamiltonian. For each extreme point
of the convex hull of the momentum map image, there is an associated
open dense subset of $X$, which is invariant under a parabolic subgroup
$Q$ of $G$. We prove a $Q$-equivariant product decomposition for the
$Q$-action on this subset and discuss some applications of the result.
We show a similar statement for real reductive subgroups of $G$ for the
restricted momentum map.
\end{abstract}
\maketitle

\begin{center}
\emph{In memory of Joseph A.Wolf}
       \end{center}

\section*{Introduction}

For a finite dimensional $G$-representation $V$ and $X=\mathbb{P}(V)$ the following local structure theorem
 holds for any point $x\in X$ with compact $G$-orbit (see \cite{BLV} for the precise statement or Section \ref{section:projective}). There is a Zariski open neighborhood $\Omega$ of $x$, a parabolic subgroup $Q$ of $G$ with Levi decomposition $Q=R\rtimes L$, an $L$-representation $F$ such that the
 bundle  $\twist{Q}{L}{F}$ associated to the $L$-principal bundle $Q\to Q/L$ is   $Q$-equivariantly
 and algebraically isomorphic to $\Omega$. The $Q$ orbit through $x$ is open in the compact orbit.
 Note that the map $R\times F\to \twist{Q}{L}{F}$ induced by
the inclusion $R\times F\subset Q\times F$ is an isomorphism. In particular $\Omega$ is an affine neighbourhood of $x$.

In this paper we prove a version of the local structure theorem for an invariant compact submanifold of a K\"ahler manifold.
More precisely, we consider a holomorphic action of a connected complex
reductive group $U^\C$ with maximal compact subgroup $U$ on a K\"ahler manifold $Z$ such that the restriction $U$
acts on $Z$ in a Hamiltonian fashion. This means, that
the K\"ahler form $\omega$ on $Z$ is $U$-invariant and there is a $U$-equivariant momentum
map $\mu\colon Z\to \lie u^*$. We will use the notations introduced in \cite{HSS},
i.e., we fix a $U$-invariant inner product $<\, ,\, >$ on $\lieu$ and use it
to identify $\lieu$ with its dual. We also identify $\lieu$  with $i\lieu$. With
this  identifications in mind a momentum map $\mu$ on $Z$
is a smooth $U$-equivariant map $\mu\colon Z\to \im \lie u$ such that for every
function $\mu^\alpha\colon Z\to \R$, $\mu^\alpha(x)=<\mu(x),\alpha>$ where $\alpha\in\im \lieu$
the  gradient condition
\[
 \grad \mu^\alpha=\alpha_Z
\]
is satisfied. The gradient is computed with respect to the Riemannian structure given by the K\"ahler form $\omega$ and  $\alpha_Z$ denotes the vector field on $Z$ given by the one-parameter subgroup $t\mapsto \exp t\alpha$ of $U^\C$ and the $U^\C$-action on $Z$.  Note that
for every $\xi\in\lieu$ the function $\mu^{i\xi}$ is
a Hamiltonian function with Hamiltonian vector field $\xi_Z$ with respect to the symplectic form $\omega$.

In the following we also fix a closed real subgroup $G$ of $U^\C$ which is compatible with the Cartan decomposition
$U^\C=U\exp(\im \lieu)$ in the sense that for $K:=U\cap G$ and $\liep:=\lieg\cap \im\lieu$ the mapping

\[
K\times \liep\to G, (k,\alpha)\mapsto k\exp\alpha
\]
is an isomorphism.

The
gradient condition does not change, if $Z$ is replaced by a $G$-invariant Riemannian submanifold of $Z$. It only
depends on the $G$-orbit.
We fix a $G$-invariant compact submanifold $X$ of $Z$.  On $X$ we have the restricted momentum map or
gradient map
\[\mup\colon X\to \liep\]
which is defined by $\mup^\alpha=\mu^\alpha\vert X$ for $\alpha \in \liep$. Note that for $X=Z$ the restricted
momentum map $\mup$ is just the map $\pi_\liep\circ\mu$ where $\pi_\liep\colon \im\lieu\to \liep$  denotes the orthogonal projection.

Let $\liea$ be a maximal Lie subalgebra of $\lieg$ which lies in $\liep$.  The map $
\mu_\liea:=\pi_\liea\circ \mu$ where $\pi_\liea\colon \liep \to \liea$
denotes the orthogonal projection of $\liep$ onto $\liea$ is a restricted momentum map for the $A=\exp\liea$-action
on $X$. The convex envelope $P$ of $\mu_\liea(X)$ is a convex polytope.

For simplicity  we refer to the case where $G$ is complex reductive and $X$ is a compact complex submanifold of $Z$
as the complex case. In the complex case we have $\liep=\im \liek$ and in this case we also have $P=\mu_\liea(X)$
for a connected manifold $X$ (\cite{atiyah,GS}).
In fact $P$ is the convex envelope  of the finite subset $\mu_a(X^A)$ of $\liea$ where $X^A$ denotes the set of $A$-fixed points in $X$. In general we do not know any connected smooth compact example where $\mu_\liea(X)$ is not convex.

We show in section \ref{section:proof} a version of the local structure theorem  at any
$x\in\mu_\liea^{-1}(\sigma)$ where $\sigma$ is an extreme point of
the momentum polytope $P$ or equivalently for any extreme point of the convex envelope $E$ of the image $\mup(X)$.

For a precise formulation of the main result we have to introduce more notation. Let $E$ be a
convex body in a finite dimensional Euclidian vector space $V$ with inner product $<\, ,\, >$.
Any $\beta\in V$ defines a face $F_\beta(E):= \{\alpha\in E: <\beta, \alpha>=\max\{<\beta,\gamma>: \gamma\in E\}\}$. A face $F$ of $E$ is called exposed by $\beta$ if $F=F_\beta(E)$.
In this case we also say that $\beta$ exposes $F$. If $F$ is exposed then the  set $C_F$ of
$\beta$ which exposes $F$ is a cone in $V$.

Let $E$ be the convex envelope of $\mup(X)$. It is known that every
face $F$ of $E$ is exposed (see \cite{BGH2}).
Now let $\sigma$ be an extreme point of $E$. The cone $C_\sigma$ is invariant under the
group $K^\sigma:=\{g\in K: g\cdot \sigma=\sigma\}$ which is the centralizer of $\sigma$ in $K$. Since $C_\sigma$ is $K^\sigma$-invariant and convex, the set $C_\sigma^{K^\sigma}$ of $K^\sigma$-fixed points in $C_\sigma$ is non empty and $\sigma$ is exposed by some $\beta\in \liep^{K^\sigma}$.

Any $\beta\in\liep$ defines the  parabolic subgroups
$G^{\beta_{-}}:=\{  g\in G: \lim\limits_{t \to +\infty} \exp(t\beta) g   \exp(-t\beta) \text{  exists}\}$ and $G^{\beta_{+}}:=\{  g\in G: \lim\limits_{t \to -\infty} \exp(t\beta) g   \exp(-t\beta) \text{  exists}\}$.
The Levi factor $G^{\beta_{-}}\cap G^{\beta_{+}}$
is the centralizer $G^{\beta}$ of $\beta$ in $G$ and the unipotent radical
of $G^{\beta_-}$ is given by
$R^{\beta_{-}}:=\{  g\in G : \lim\limits_{t \to +\infty} \exp(t\beta) g
\exp(-t\beta)=e\}$.

We also have the subsets
$X_{\max}^\beta=\{x\in X: \mup^\beta(x)=\max\mup^\beta(X)\}\subset X^\beta:=\{x\in X:\beta_X(x)=0\}$
and
$X_{\max}^{\beta_-}=\{x\in X: \lim\limits_{t \to +\infty} \exp(t\beta)\cdot x \text{ exists in } X_{\max}^{\beta_-}\}$
of $X$ which are $G^{\beta}$-stable and we have a surjective smooth $G^\beta$-equivariant map
$p^{\beta_{-}}\colon X_{\max}^{\beta_{-}}\to X^{\beta}_{\max}$, $p^{\beta_{-}}
(z)=\lim\limits_{t \to +\infty}\exp(t\beta)\cdot z$ (see section \ref{section:parabolic} for more details).
The set $X_{\max}^{\beta_-}$ is open and $G^{\beta_-}$-stable, $X_{ \max}^\beta$ is a smooth $G^\beta$-stable
compact submanifold of $X$ and
$p^{\beta_{-}}\colon X_{\max}^{\beta_{-}}\to X^{\beta}_{\max}$ is a smooth $G^\beta$-equivariant fibration
(see \cite{HSS} and section \ref{section:parabolic} for more details).

Our main result is the following

 \begin{theorem*} Let $X$ be a $G$-invariant compact submanifold of $Z$ and let $\sigma$ be an extreme point of the convex envelope $E$ of $\mup(X)$. Then for any
$\beta\in C_{\sigma}^{K^\sigma}$ there is a normal compatible subgroup $I^\beta$ of $G^\beta$ containing $\exp\R\beta$ such that $G^\beta/I^\beta$ is compact and for every $x\in X_{\max}^{\beta}$ the following holds.

There is an open neighbourhood $U \subseteq  X^{\beta}_{max} $ of $x$ , an $I^{\beta}$-representation $F$, with all weights of $\beta$ being strictly negative and a $R^{\beta_-}\rtimes I^\beta$-equivariant smooth isomorphism
	\[
	\Phi:   R^{\beta_{-}} \times F \times U \to  (p^{\beta_-})^{-1}(U)
	\]
such that $p^{\beta_-}\circ \Phi=p_U$, where $p_U$ is the projection onto $U$.
 \end{theorem*}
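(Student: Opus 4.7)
The plan is to construct $\Phi$ as an $R^{\beta_{-}}$-equivariant extension of a local slice for the $R^{\beta_{-}}$-action on $(p^{\beta_{-}})^{-1}(U)$. The geometric input, coming from the results recalled in Section \ref{section:parabolic} and in \cite{HSS}, is that the $\beta$-gradient flow identifies $X_{\max}^{\beta_{-}}$ with an open neighbourhood of the zero section of the normal bundle $N\to X_{\max}^{\beta}$ of $X_{\max}^{\beta}$ in $X$; under this identification $p^{\beta_{-}}$ becomes the bundle projection $N\to X_{\max}^{\beta}$, and the fibrewise $\exp(\R\beta)$-action on $N$ has only strictly negative weights, because points of $X_{\max}^{\beta}$ maximise $\mup^{\beta}$.

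At a fixed $x\in X_{\max}^{\beta}$ I would exploit the decomposition
\[
T_xX \;=\; T_xX_{\max}^{\beta} \,\oplus\, N_x, \qquad N_x \;=\; (\lie r^{\beta_{-}}\!\cdot x) \,\oplus\, F_x,
\]
where $\lie r^{\beta_{-}}\!\cdot x$ is the image of the infinitesimal orbit map (which lands in $N_x$ since $\lie r^{\beta_{-}}$ has negative $\beta$-weights and $x$ is $\beta$-fixed) and $F_x$ is a complement to be chosen invariant under a suitable subgroup $I^{\beta}$ of $G^{\beta}$. To define $I^{\beta}$, I would exploit the reductive structure of $G^{\beta}$: since $\beta$ lies in the centre of $\lie g^{\beta}$, one may adjoin to $\exp\R\beta$ the semisimple part of $G^{\beta}$ together with the non-compact directions of its centre, so that the quotient $G^{\beta}/I^{\beta}$ becomes compact while $I^{\beta}$ remains normal and compatible. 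The compactness of $G^{\beta}/I^{\beta}$ is precisely what allows a standard averaging argument to produce an $I^{\beta}$-invariant complement $F_x$ and, after shrinking to an $I^{\beta}$-invariant neighbourhood $U$ of $x$ in $X_{\max}^{\beta}$, an $I^{\beta}$-equivariant trivial subbundle $U\times F\subseteq N|_U$ of the chosen complement.

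With these ingredients the map is defined by $\Phi(r,f,u)=r\cdot\sigma(u,f)$, where $\sigma\colon U\times F\to X_{\max}^{\beta_{-}}$ is the section obtained by composing the trivialisation $U\times F\hookrightarrow N|_U$ with the tubular identification $N|_U\hookrightarrow X_{\max}^{\beta_{-}}$. By construction $p^{\beta_{-}}\circ\Phi=p_U$. Equivariance under $R^{\beta_{-}}\rtimes I^{\beta}$ is then built in: $R^{\beta_{-}}$ acts by left translation on its own factor, $I^{\beta}$ conjugates $R^{\beta_{-}}$ through the semidirect-product structure, acts linearly on $F$, and preserves $U$ by choice. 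The differential of $\Phi$ at $(e,0,x)$ is an isomorphism, because its image spans $T_xX_{\max}^{\beta}\oplus(\lie r^{\beta_{-}}\!\cdot x)\oplus F_x=T_xX$, so $\Phi$ is a local diffeomorphism. Global bijectivity on $R^{\beta_{-}}\times F\times U$ is then forced by the contracting dynamics of $\exp(t\beta)$ as $t\to+\infty$: any point of $(p^{\beta_{-}})^{-1}(U)$ eventually enters the tubular model, and its unique preimage is read off by separating an $R^{\beta_{-}}$-piece (from the infinitesimal orbit directions) and an $F$-piece (from the complementary negative-weight directions).

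The hardest step I anticipate is the simultaneous construction of $I^{\beta}$ and of the $I^{\beta}$-invariant trivialisation $U\times F$ of the complement. One cannot simply average over the whole of $G^{\beta}$ (it is non-compact) nor restrict to $\exp\R\beta$ (too small to split off $\lie r^{\beta_{-}}\!\cdot x$ canonically), and the $I^{\beta}$ one ends up with has to interact correctly with the semidirect-product target $R^{\beta_{-}}\rtimes I^{\beta}$ so that the equivariance of $\Phi$ holds on the nose. Once this subgroup and the complement are correctly pinned down, the remainder is a careful slice/tubular-neighbourhood construction in the spirit of the Bia\l ynicki--Birula decomposition and the Luna slice theorem, adapted to the real reductive situation via the gradient-map techniques of \cite{HSS}.
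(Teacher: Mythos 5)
Your overall skeleton --- linearize near $x$, split the negative-weight directions into the $R^{\beta_-}$-orbit direction and a complement $F$, define $\Phi(r,f,u)=r\cdot\sigma(u,f)$, and globalize via the contracting dynamics of $\exp(t\beta)$ --- is the same as the paper's. But both of the steps you yourself flag as the hardest contain genuine gaps. The most serious is your definition of $I^\beta$: you build it purely group-theoretically (adjoining to $\exp\R\beta$ the semisimple part of $G^\beta$ and the non-compact directions of its centre). The equivariance demanded by the theorem, in which $I^\beta$ acts trivially on the factor $U$ (compare the action $(r,h)\cdot(g,v,u)=(rgh\inv,h\cdot v,u)$ in Theorem \ref{theorem:structuretheorem}, or just apply $p^{\beta_-}\circ\Phi=p_U$ together with $G^\beta$-equivariance of $p^{\beta_-}$), forces $I^\beta$ to fix $U\subseteq X^\beta_{\max}$ pointwise, and hence all of $X^\beta_{\max}$ since the statement is uniform in $x$. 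A compact simple factor of $G^\beta$ lies in $K^\beta=K^\sigma$ and has no reason to act trivially on $\mup\inv(\sigma)=X^\beta_{\max}$, so your $I^\beta$ is in general too large and your ``$I^\beta$-invariant trivial subbundle $U\times F$'' does not exist for it. The paper instead takes $I^\beta$ to be the ineffectivity $\bigcap_{y\in X^\beta_{\max}}G^\beta_y$; because $\mup\equiv\sigma$ on $X^\beta_{\max}$ one gets $\liep^\beta\cdot y=0$ there, so the ineffectivity contains the non-compact factor of $G^\beta$ and is automatically normal, $\Theta$-compatible and cocompact (Proposition \ref{proposition:constant}, Corollary \ref{corollary:noncompactfactors}). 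Relatedly, your mechanism for producing the invariant complement $F_x$ is backwards: one cannot average over the non-compact group $I^\beta$, and averaging over the compact quotient $G^\beta/I^\beta$ produces $G^\beta$-invariance from $I^\beta$-invariance, not an $I^\beta$-invariant splitting. What is actually used is complete reducibility of the $I^\beta$-module $T_xX$, which holds because $I^\beta$ is $\Theta$-compatible and the $\liep$-part acts by self-adjoint operators (Remarks \ref{remark:slice-points}); the splitting $W^{\beta_-}=\lie r^{\beta_-}\cdot x\oplus F$ then comes from the Slice Theorem applied to $I^\beta$ at $x$.

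The second gap is that you never verify injectivity of the infinitesimal orbit map $\lie r^{\beta_-}\to\lie r^{\beta_-}\cdot x$. Your argument that $d\Phi_{(e,0,x)}$ is an isomorphism ``because its image spans $T_xX$'' only gives surjectivity; for it to be an isomorphism (equivalently, for $\Phi$ to have discrete fibres at all) you need $\dim\lie r^{\beta_-}=\dim(\lie r^{\beta_-}\cdot x)$. This is precisely Lemma \ref{lemma:unipotentliniarization}, which rests on Proposition \ref{proposition:extreme} (\ref{item:extreme6}), and it is the one place where the hypothesis $\beta\in C_\sigma^{K^\sigma}$ (via $K^\beta=K^\sigma$ and the vanishing of $\lie r^{\beta_+}\cdot x$) enters; your proposal never invokes this hypothesis. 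Once these two points are repaired, your globalization step (sweeping a product neighbourhood of $(e,0,x)$ over all of $R^{\beta_-}\times F\times U$ by the $\Gamma_\beta$-flow) is essentially the paper's argument and is fine.
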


For $x\in\mup\inv (\sigma) $ the orbits $G^\beta\cdot x$ and $G\cdot x$ are compact. It should also
be underlined that $\mup^{-1}(\sigma)=X^{\beta}_{\max}$ for every $\beta$ in $C_ \sigma$. Moreover for $\beta\in C_{\sigma}^{K^\sigma}$ the open subset  $X^{\beta_-}_{\max}$ and the smooth fibration $p^{\beta_-}$ only depend on the face $\sigma$ and not the choice of $\beta\in C_{\sigma}^{K^\sigma}$  (see  \cite{BGH2} for more details).

 In section \ref{section:quotients} we apply the above theorem  in order to obtain the following.

 \begin{theorem*}
  Let $\sigma$ be an extreme point of the convex envelope $E$ of ${\mup(X)}$ and $\beta\in C_\sigma$. Then
  $\mu^{-1}(\sigma)=X^{\beta}_{\max}\subset X^{G^\beta}$. If $\beta$ is $K^\sigma$-invariant, then $K^\beta=K^\sigma$ and the $R^{\beta_-}$-action on $X_{\max}^{\beta_-}$ is
  proper and free. The quotient
  $X_{\max}^{\beta_-}/R^{\beta_-}$ is a complex manifold with an induced holomorphic
  $G^\beta$-action. The  map $p^{\beta_-}:X^{\beta_-}_{\max}\to X_{\max}^\beta$ induces a $I^\beta$-invariant holomorphic bundle map $q\colon X_{\max}^{\beta_-}/R^{\beta_-}\to X^{\beta}_{\max}$
  with typical fiber $F$.
 \end{theorem*}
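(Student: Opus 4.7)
The proof applies the main structure theorem together with two direct moment-map computations that exploit the extremality of $\sigma$.

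First, since $\beta\in C_\sigma$ exposes $\sigma$, the linear function $\langle\cdot,\beta\rangle$ attains its maximum on the convex envelope $E$ only at $\sigma$, so pulling back through $\mup$ yields $X^\beta_{\max}=\mup^{-1}(\sigma)$. For the inclusion $\mup^{-1}(\sigma)\subset X^{G^\beta}$ I split $\lieg^\beta=\liek^\beta\oplus\liep^\beta$. For $\alpha\in\liep^\beta$, the set $\mup^{-1}(\sigma)$ is $\exp(t\alpha)$-invariant (being $G^\beta$-stable) and $\mup^\alpha$ restricts to it as the constant $\langle\sigma,\alpha\rangle$; differentiating along the $\alpha$-flow and using $\grad\mup^\alpha=\alpha_X$ gives $0=\frac{d}{dt}\mup^\alpha(\exp(t\alpha)x)=\|\alpha_X(\exp(t\alpha)x)\|^2$, so $\alpha_X$ vanishes on $\mup^{-1}(\sigma)$ and $\exp\liep^\beta$ acts trivially there. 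For the $K^\beta$-part, $K^\beta$ preserves $\mup^{-1}(\sigma)$, so $K$-equivariance of $\mup$ forces $\Ad(K^\beta)\sigma=\{\sigma\}$ and hence $K^\beta\subset K^\sigma$; choosing a $K^\sigma$-invariant $\beta'\in C_\sigma^{K^\sigma}$ (which leaves $X^{\beta_-}_{\max}$ and $p^{\beta_-}$ unchanged by the remark after the main theorem) and invoking the main theorem for $\beta'$, the local slice $\Phi\colon R^{\beta'_-}\times F\times U\to (p^{\beta'_-})^{-1}(U)$ has all $\beta'$-weights on $F$ strictly negative, so $\mup^{-1}(\sigma)$ meets each fiber only in $\{e\}\times\{0\}\times U$; equivariance together with the triviality along $\liep^\beta$ then rules out a non-trivial $K^\beta$-action on $U$. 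Under the hypothesis $\beta\in\liep^{K^\sigma}$, the reverse inclusion $K^\sigma\subset K^\beta$ is automatic, so $K^\beta=K^\sigma$.

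For the remaining claims I rely on the main theorem's local isomorphism $\Phi\colon R^{\beta_-}\times F\times U\to (p^{\beta_-})^{-1}(U)$, which is $R^{\beta_-}$-equivariant under left translation on the first factor. Left translation is free and proper, so freeness and properness of the $R^{\beta_-}$-action on $X^{\beta_-}_{\max}$ follow by covering $X^\beta_{\max}$ with such charts. The quotient $(p^{\beta_-})^{-1}(U)/R^{\beta_-}$ is identified with $F\times U$, and I upgrade this to a complex chart: $F$ is an underlying complex vector space of an $I^\beta$-representation, while $X^\beta_{\max}$ inherits a complex structure from the ambient K\"ahler $Z$ because the vanishing of $\alpha_X$ for $\alpha\in\liep^\beta$ makes $X^\beta_{\max}$ a $J$-invariant critical submanifold of $Z$. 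The $(R^{\beta_-}\rtimes I^\beta)$-equivariance of $\Phi$ makes the overlap maps holomorphic, globalising the complex structure on $X^{\beta_-}_{\max}/R^{\beta_-}$. The $G^\beta$-action descends because $R^{\beta_-}$ is normal in $G^{\beta_-}$, and it is holomorphic by local inspection. Finally $q$ is the projection $F\times U\to U$ in each chart, hence a holomorphic fibre bundle with typical fibre $F$, and its $I^\beta$-invariance is built into $\Phi$.

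The main obstacle is the inclusion $\mup^{-1}(\sigma)\subset X^{G^\beta}$: although the $\liep^\beta$-directions are cheap, extending to the compact directions $K^\beta$ requires the local structure theorem and relies crucially on the strict negativity of the $\beta$-weights on $F$. A secondary subtlety is to endow $X^\beta_{\max}$ with a genuine complex structure when $X$ is only real-analytic; this rests on recognising $X^\beta_{\max}$ as a $J$-invariant critical submanifold of the ambient K\"ahler $Z$ and then checking that the overlap data furnished by $\Phi$ are holomorphic.
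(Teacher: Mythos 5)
Your strategy coincides with the paper's: everything is reduced to the local model $\Phi\colon R^{\beta_-}\times F\times U\to (p^{\beta_-})\inv(U)$ of Theorem \ref{theorem:structuretheorem}. The identity $\mup\inv(\sigma)=X^\beta_{\max}$, the equality $K^\beta=K^\sigma$, the vanishing of $\liep^\beta\cdot x$ via the gradient computation, and the freeness/properness argument (which works precisely because the charts $(p^{\beta_-})\inv(U)$ are $R^{\beta_-}$-saturated, the point exploited in Corollary \ref{corollary:properandfree}) all match Proposition \ref{proposition:extreme} and Section \ref{section:quotients}.

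The genuine gap is the inclusion $\mup\inv(\sigma)\subset X^{G^\beta}$ in the $\liek^\beta$-directions. The step ``equivariance together with the triviality along $\liep^\beta$ rules out a non-trivial $K^\beta$-action on $U$'' is not an argument: $U$ is an open piece of $X^\beta_{\max}$ itself, $K^\beta$ merely preserves $X^\beta_{\max}$, and nothing you have written prevents it from moving points there. Indeed the inclusion fails for a general compatible real $G$: take $G=K$ compact, so that $\liep=\{0\}$, $\sigma=\beta=0$ and $X^\beta_{\max}=X$, while $X^{G^\beta}=X^G$. The inclusion is a statement about the connected complex case, and the paper's proof (Corollary \ref{corollary:extreme:complexcase}) is the one line you are missing: there $\liep^\beta=\im\liek^\beta$, so $\liek^\beta\cdot x=J(\liep^\beta\cdot x)=\{0\}$, hence $\lieg^\beta\cdot x=\{0\}$, and $G^\beta$ is connected, so it fixes $x$. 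The same omission resurfaces in your construction of the complex structure on $X^{\beta_-}_{\max}/R^{\beta_-}$: equivariance of $\Phi$ under the real group $R^{\beta_-}\rtimes I^\beta$ does not make transition maps holomorphic, and $F$ need not be a complex subspace of $W^{\beta_-}$ unless $\lie r^{\beta_-}\cdot x$ is $J$-invariant, i.e.\ unless $G$ itself is complex and not merely $X$. What the paper actually uses is that in the complex case the slice $\Psi$, and hence $\Phi$, can be chosen biholomorphic (Theorem \ref{theorem:slice}(3) together with Remark \ref{remark:theorem:structuretheorem:complexcase}); with that in hand the quotient charts $F\times U$ and the induced holomorphic $G^\beta$-action follow as you describe.
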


The same theorem holds in the complex case. In this case $X^\beta_{\max}$ is connected if $X$ is connected and $I^\beta$ contains the connected component of the identity of $G^\beta$. If
$G$ is connected then we have $G^\beta=I^\beta$. In particular
the holomorphic bundle map $q\colon X_{\max}^{\beta_-}/R^{\beta_-}\to X^{\beta}_{\max}$ is a $G^\beta$-invariant
Stein map (see \cite{HMP}). The same is true
for the holomorphic map $p^{\beta_-}\colon X^{\beta_-}_{\max}\to X^\beta_{\max}$.

 Let $G$ be a connected complex reductive group. For an extreme point $\sigma$ of $E$  and every
 $\beta\in C_\sigma$ we have $X^\beta_{\max}\subset X^{G^\beta}$ and for
 $\beta\in C_\sigma^{K^\sigma}$ we obtain the following
 complex version of the above theorem.

 \begin{theorem*}
For any
$\beta\in C_{\sigma}^{K^\sigma}$ and every $x\in X_{\max}^{\beta}$ there is an open neighbourhood $U \subset  X^{\beta}_{max} $ of $x$ and a $G^{\beta}$-representation $F$, with all weights of $\beta$ being strictly negative and a $G^{\beta_{-}}$-equivariant biholomorhic map
	\[
		\Phi:   (G^{\beta_{-}} \times^{G^{\beta}}  F) \times U \to  (p^{\beta_-})^{-1}(U)
	\]
	such that $p^{\beta_-}\circ \Phi=p_\Omega$, where $p_\Omega$ is the projection onto $\Omega$.
\end{theorem*}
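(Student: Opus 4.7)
First I would reduce to the main theorem. Since $G$ is connected complex reductive, the discussion immediately preceding the statement gives $I^\beta = G^\beta$, hence $R^{\beta_-} \rtimes I^\beta = G^{\beta_-}$ via the Levi decomposition. Applying the main theorem produces a smooth $G^{\beta_-}$-equivariant isomorphism $\Phi_0 \colon R^{\beta_-} \times F \times U \to (p^{\beta_-})^{-1}(U)$ with $p^{\beta_-} \circ \Phi_0 = p_U$. The natural map $(r,v) \mapsto [r,v]$ from $R^{\beta_-} \times F$ to $G^{\beta_-} \times^{G^\beta} F$ is a $G^{\beta_-}$-equivariant biholomorphism, because the projection $G^{\beta_-} \to G^{\beta_-}/G^\beta \cong R^{\beta_-}$ is globally split by the semidirect product structure. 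Composing these two identifications recasts $\Phi_0$ as a smooth $G^{\beta_-}$-equivariant isomorphism of the required form.

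The second and main step is to upgrade this map to a biholomorphism, which I would do by constructing $\Phi$ directly from holomorphic data rather than by trying to verify holomorphy of the smooth $\Phi_0$. The complex version of the preceding theorem provides a holomorphic $G^\beta$-equivariant fiber bundle $q \colon X^{\beta_-}_{\max}/R^{\beta_-} \to X^\beta_{\max}$ with typical fiber $F$, together with a holomorphic free and proper $R^{\beta_-}$-quotient $\pi \colon X^{\beta_-}_{\max} \to X^{\beta_-}_{\max}/R^{\beta_-}$. I would take $F$ to be the negative $\beta$-weight subspace of $T_x X / T_x X^\beta_{\max}$, a holomorphic $G^\beta$-subrepresentation. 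After shrinking $U$ to a sufficiently small $K^\beta$-invariant Stein neighbourhood of $x$, one obtains a $G^\beta$-equivariant holomorphic trivialization $q^{-1}(U) \cong F \times U$. The principal bundle $\pi$ then splits holomorphically and $G^\beta$-equivariantly over the Stein open set $q^{-1}(U)$, because $R^{\beta_-}$ is unipotent (a tower of $\C$-extensions), so holomorphic principal $R^{\beta_-}$-bundles trivialize over Stein bases. Concatenating the two trivializations produces a $G^{\beta_-}$-equivariant biholomorphism $R^{\beta_-} \times F \times U \to (p^{\beta_-})^{-1}(U)$, and transporting through the identification of the first step yields the required $\Phi$.

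The main technical obstacle is the existence of the $G^\beta$-equivariant holomorphic trivialization of $q$ over $U$. This is a complex-analytic avatar of the Bialynicki-Birula theorem: since $G^\beta$ is complex reductive and fixes the compact complex submanifold $X^\beta_{\max}$ pointwise while acting linearly on its normal bundle in $X^{\beta_-}_{\max}/R^{\beta_-}$ with only negative $\beta$-weights, a $G^\beta$-equivariant holomorphic tubular neighbourhood of the zero section of $q$ provides the trivialization after shrinking $U$. All remaining assertions reduce to equivariance bookkeeping.
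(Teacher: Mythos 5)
Your first step is exactly the paper's reduction (Remark \ref{remark:theorem:structuretheorem:complexcase}): $I^\beta=G^\beta$ in the connected complex case, and $R^{\beta_-}\times F\cong\twist{G^{\beta_-}}{G^\beta}{F}$ via the Levi decomposition. The trouble is your second step. The ``main technical obstacle'' you identify does not arise in the paper's argument, and the route you take around it has two genuine gaps. First, it is circular: the holomorphic bundle structure of $q\colon X^{\beta_-}_{\max}/R^{\beta_-}\to X^{\beta}_{\max}$ with typical fiber $F$ (and the fact that the quotient is a complex manifold at all) is obtained in the paper as a \emph{consequence} of the structure theorem you are trying to prove --- Section \ref{section:quotients} derives freeness, properness and the quotient from Theorem \ref{theorem:structuretheorem}, not the other way round. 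Second, even granting that input, the assertion that the principal $R^{\beta_-}$-bundle splits holomorphically \emph{and $G^\beta$-equivariantly} over the Stein set $q^{-1}(U)$ is not justified: Grauert's Oka principle gives a holomorphic trivialization over a contractible Stein base, but says nothing about equivariance, and you cannot average a section of a principal bundle over the noncompact group $G^\beta$. (There is also a slip in your choice of $F$: the negative-weight part of $T_xX/T_xX^{\beta}_{\max}$ is all of $W^{\beta_-}$, whereas the $F$ of the theorem is an $I^\beta$-invariant complement of $\lie r^{\beta_-}\cdot x$ inside $W^{\beta_-}$.)

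The observation you are missing is that no upgrading from smooth to holomorphic is needed. The Slice Theorem (Theorem \ref{theorem:slice}, part (3)) is already holomorphic in the complex case: the splitting $T_xX=W^{\beta_-}\oplus W^{\beta}$ can be chosen $\C$-linear and the slice map $\Psi\colon W^{\beta_-}\times U\to\Omega$ is a $G^\beta$-equivariant biholomorphism. Since the $G$-action on $X$ is holomorphic, the map $\Phi(g,v,u)=g\cdot\Psi(v,u)$ of Theorem \ref{theorem:structuretheorem} is holomorphic by construction, and the contraction argument in its proof (pushing everything to $(e,0,u)$ with $\exp t\beta$) shows it is bijective with invertible differential, hence biholomorphic. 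Composing with the biholomorphism $R^{\beta_-}\times F\times U\to\twist{G^{\beta_-}}{G^\beta}{F}\times U$ from your first step finishes the proof; none of the Stein theory is required.
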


In contrast to the algebraic local structure theorem of Brion-Luna-Vust in \cite{BLV} our
result does not imply the local structure theorem for all compact $G$-orbits in  $X$. Nevertheless we show
in section \ref{section:projective} that the result in \cite{BLV} is implied by the above theorem.

%%%%%%%%%%%%%%%%%%%%%%%%%%%%%%%%%%%%%%%%%%%%%%%%%%%%%%%%%%%%%%%%%
\section{Basic properties of the momentum map}
\label{section:basic}
%%%%%%%%%%%%%%%%%%%%%%%%%%%%%%%%%%%%%%%%%%%%%%%%%%%%%%%%%%%%%%%%

Let $Z$ be a complex manifold with a holomorphic action of the complex
reductive group $U^\C$, where $U^\C$ is the complexification of the
compact Lie group $U$.  The corresponding Cartan involution is denoted by $\Theta$.
We assume that $Z$ admits a smooth
$U$-invariant K\"ahler structure and a $U$-equivariant moment mapping
$\mu\colon Z\to\lieu^*$, where $\lieu$ is the Lie algebra of $U$ and
$\lieu^*$ its dual. As in the introduction we choose an invariant inner product on $\lieu$ and use it
to identify $\lieu^*$ with $\lieu$. In order to simplify the notations we will identify $\lieu$ as a $U$-representation isometrically with $\im\lieu$ by
multiplying  with $\im$.

We also  assume that $G\subset U\c$ is a closed
subgroup such that the Cartan decomposition $U\c=U \exp(i\lieu)\simeq
U\times i\lieu$ induces a Cartan decomposition $G=K\exp\liep\simeq
K\times \liep$ where $K=U\cap G$ and $\liep\subset i\lieu$ is an $(\Ad
K)$-stable linear subspace.  In this case we also say that $G$ is compatible with $\Theta$ or
compatible with the Cartan decomposition of $U^\C$. We have the subspace $\liep\subset\im\lieu$
and by restriction an induced restricted momentum mapping $\mup\colon
Z\to \liep$ which is the composition of the orthogonal projection of $\im\lieu$ onto $\liep$ with $\mu$.

 For $\beta\in\liep$ and $x\in Z$ let
$\mu_\liep^\beta(x):=<\mu(x),\beta>$. With our convention the momentum map condition means that
\(\grad\mup^\beta=\beta_Z \) where $\beta_Z$ is the vector field on
$Z$ corresponding to $\beta$ and $\grad$ is computed with respect
to the Riemannian metric induced by the K\"ahler structure.  We
call $\mup$ the $G$-gradient map associated with $\mu$ or the restricted momentum map. Note that the
condition on $\mup$ just means that for every $\xi\in \liep$ the vector field $(\im\xi)_Z$ is a Hamiltonian
vector field on $Z$ with Hamiltonian function $\mup^\xi$.

For a   $G$-stable subset $X$ of
$Z$ we may consider $\mu_\liep$ as a $K$-equivariant mapping $\mup\colon X\to \liep$
such that
\[
\grad\mup^\beta=\beta_X
\]
holds orbitwise. The gradient can be computed on each $G$-orbit with respect to the induced
Riemannian metric. The vector field $\beta_X$ is given by the flow $(t,x)\mapsto \exp t\beta\cdot x$.
Since $X$ is $G$-stable we have $\beta_Z(z)=\beta_X(z)$ for $z\in X$ and the map $\mup$ on $X$ is the restriction of
the map $\mup\colon Z\to \liep$ . We are interested mostly in the case where $X$ is a compact $G$-invariant subset
of $Z$.

\begin{remark}\label{remark:complexcase}
If $X$ in the above setup is smooth, then we refer to this as the smooth case.
 If $X$ is a smooth complex submanifold of $Z$ and $G$ a complex reductive subgroup
 of $U^\C$ which is compatible with the Cartan decomposition of $U^\C$, the  K\"ahler form $\omega$ on $Z$ as well as the momentum map $\mup$ restricts to $X$  and we have  $\liep=\im\liek$. We refer to this setup as the smooth complex case. This case has to be distinguished from the case where $X$ is
 smooth and complex and  $G$ is a non-complex $\Theta$-compatible Lie subgroup of $U^\C$.
\end{remark}

 We collect a few elementary properties of the restricted momentum
map $\mup\colon X\to \liep$. Details can be found in \cite{HS}.
For a subspace $\lie m$ of $\lieg$  and $x\in X$ we set $\lie m\cdot x=\{\xi_X(x): \xi\in\lie m\}$, $\liem_x:=\{\xi\in \liem: \xi_X(x)=0\}$. If  $x$ is a smooth point of $X$ and $V$ is a subspace
of $T_x X$ let $V^\perp$ denote the perpendicular subset with  respect to
the Riemannian metric $x\mapsto (\ ,\ )_x$ on the smooth part of $X$. Note that any point $x$ is a smooth
point in its $G$-orbit.

\begin{lemma}
Let $\liem\subset\liep$ be a subspace.  Then
$\ker d\mu_\liem(x)=(\liem\cdot x)^\perp$ on any smooth $G$-invariant submanifold $M$ of $Z$.
\end{lemma}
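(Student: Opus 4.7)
The plan is to unwind the definition of $\mu_\liem$ as an $\liem$-valued map and then apply the gradient/momentum condition coordinate-wise in $\liem$.

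First, I would observe that by construction $\mu_\liem=\pi_\liem\circ\mu$ is characterized by
\[
\langle \mu_\liem(y),\alpha\rangle=\langle \mu(y),\alpha\rangle=\mu^\alpha(y)
\]
for every $\alpha\in\liem$ and every $y\in M$, since $\alpha\in\liem\subset\liep\subset\im\lieu$ and the projection $\pi_\liem$ is orthogonal. Differentiating at $x$ gives, for all $v\in T_x M$,
\[
\langle d\mu_\liem(x)(v),\alpha\rangle=d\mu^\alpha(x)(v).
\]
Consequently, $v$ lies in $\ker d\mu_\liem(x)$ if and only if $d\mu^\alpha(x)(v)=0$ for every $\alpha\in\liem$.

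Next, I would invoke the gradient condition on the smooth $G$-invariant submanifold $M$. Since $M$ is $G$-stable, each vector field $\alpha_Z$ with $\alpha\in\liep$ is tangent to $M$ and equals $\alpha_M$ there, and the restricted inner product on $T_x M$ is just $(\,,\,)_x$. Hence the identity $\grad\mu^\alpha=\alpha_Z$ descends to $M$ as
\[
d\mu^\alpha(x)(v)=(\alpha_M(x),v)_x\qquad\text{for all } v\in T_x M.
\]
Combining this with the previous equivalence yields
\[
v\in\ker d\mu_\liem(x)\iff (\alpha_M(x),v)_x=0\text{ for all }\alpha\in\liem\iff v\in(\liem\cdot x)^\perp,
\]
which is the desired equality.

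There is no real obstacle here: the statement is essentially a bookkeeping consequence of the momentum map defining property together with the self-adjointness of orthogonal projection. The only thing to be slightly careful about is the identification of $\lieu$ with $\im\lieu$ and the consistent use of the inner product on $\liep$, so that pairing against $\alpha\in\liem$ in $\liem$ matches pairing in $\liep$ and hence in $\im\lieu$; once this is flagged, the chain of equivalences above is immediate.
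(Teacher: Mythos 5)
Your proof is correct and follows the same route as the paper, whose entire proof is the one-line remark that the claim follows from $\grad\mu^\beta=\beta_X$ for $\beta\in\liep$; you have simply written out in full the chain of equivalences that this remark compresses.
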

\proof This follows from $\grad\mu^\beta=\beta_X$ for $\beta\in \liep$.
\qed

\begin{lemma} \label{lemma:monotony}
 For all $\beta\in \liep$ and $x\in X$ and the curve $\gamma\colon \R\to X$, $\gamma(t)=\exp t\beta\cdot x$ we either have
 \begin{itemize}
  \item[i)] $x\in X^\beta$ or
  \item[ii)] the function $t\mapsto \mup^\beta(\gamma(t)$ is strictly increasing.
 \end{itemize}
\end{lemma}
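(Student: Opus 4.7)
The plan is to differentiate $t \mapsto \mup^\beta(\gamma(t))$ along the flow and invoke the gradient condition $\grad \mup^\beta = \beta_X$, which holds orbitwise on $X$ by hypothesis. Since the orbit $G\cdot x$ is a smooth $G$-invariant submanifold on which $\gamma$ lies entirely and along which the gradient condition applies, I can compute directly
\[
\frac{d}{dt}\mup^\beta(\gamma(t)) = d\mup^\beta_{\gamma(t)}\bigl(\beta_X(\gamma(t))\bigr) = \bigl(\grad \mup^\beta(\gamma(t)),\,\beta_X(\gamma(t))\bigr)_{\gamma(t)} = \lVert \beta_X(\gamma(t))\rVert^2 \geq 0.
\]
So the function is automatically non-decreasing, and strict monotonicity amounts to showing that if $x \notin X^\beta$ then $\beta_X(\gamma(t))$ never vanishes.

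The dichotomy then follows from uniqueness of integral curves for the vector field $\beta_X$. If $\beta_X(\gamma(t_0)) = 0$ for some $t_0$, then $\gamma(t_0)$ is a stationary point of the flow of $\beta_X$, so by uniqueness the integral curve through $\gamma(t_0)$ is constant; but that integral curve is $\gamma$ itself (up to time-shift), hence $\gamma$ is constant and $x = \gamma(0) \in X^\beta$. Contrapositively, if $x \notin X^\beta$, then $\beta_X(\gamma(t)) \neq 0$ for all $t \in \R$, making the derivative strictly positive and the function strictly increasing.

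The argument is essentially a one-line calculation once the gradient condition is in hand, so I do not anticipate any real obstacle; the only points requiring care are (i) making sure the gradient formula is legitimately applied at $\gamma(t)$, which is fine because $\gamma$ stays inside the single $G$-orbit $G\cdot x$ where the Riemannian metric and gradient formula are defined, and (ii) citing uniqueness of integral curves to pass from a single zero of $\beta_X$ along $\gamma$ to the conclusion $x \in X^\beta$.
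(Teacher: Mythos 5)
Your proof is correct and follows essentially the same route as the paper: the paper's entire argument is the computation $\frac{d}{dt}(\mup^\beta\circ\gamma)(t)=\left(\beta_X(\gamma(t)),\beta_X(\gamma(t))\right)_{\gamma(t)}$, leaving the passage from a single zero of $\beta_X$ along $\gamma$ to the conclusion $x\in X^\beta$ implicit. Your explicit appeal to uniqueness of integral curves merely fills in that implicit step.
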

\begin{proof}
This follows from
  \[
  \frac d{dt}(\mu^\beta\circ\gamma)(t)=\left(\beta_X(\gamma(t)),\beta_X(\gamma(t))\right)_{\gamma(t)}.
  \]
\end{proof}
For $\beta \in \liep$ let $\Mup(\beta):=\mu_{\mathfrak{p}}^{-1}(\beta)$ and
since $\beta=0$ plays a prominent role we set $\mathcal
M_{\mathfrak{p}}:=\mathcal M_{\mathfrak{p}}(0)$. Using
Lemma~\ref{lemma:monotony} and the $K$-equivariance of $\mup$, one can
prove the following

\begin{corollary}
\label{corollary:isotropycompatibel}
Let $\beta$ be a $K$-fixed point in $\liep$ and $x\in\Mup(\beta)$. Then
\begin{enumerate}
\item $G\cdot x\cap\Mup(\beta)=K\cdot x$.
\item The $G$-isotropy group
$G_x$ of $x$ is compatible with the Cartan decomposition of
$U^\C$. The decomposition is given by $G_x=K_x\exp(\liep_x)$.
\end{enumerate}
\end{corollary}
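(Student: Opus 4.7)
The strategy is to use the Cartan decomposition $G=K\exp(\liep)$ to reduce every question to the behavior of one-parameter subgroups $\exp(t\xi)$ with $\xi\in\liep$, where Lemma~\ref{lemma:monotony} applies. The $K$-fixedness of $\beta$ is essential: together with $K$-equivariance of $\mup$ it guarantees that the level set $\Mup(\beta)$ is $K$-stable, since $\mup(k\cdot y)=k\cdot\mup(y)=\beta$ whenever $y\in\Mup(\beta)$.

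For assertion (1), the inclusion $K\cdot x\subset G\cdot x\cap\Mup(\beta)$ follows immediately from the $K$-stability of $\Mup(\beta)$. Conversely, suppose $g\cdot x\in\Mup(\beta)$ and write $g=k\exp\xi$ with $k\in K$ and $\xi\in\liep$. Then $\exp\xi\cdot x=k^{-1}g\cdot x$ still lies in $\Mup(\beta)$. Consequently the function $t\mapsto\mup^\xi(\exp(t\xi)\cdot x)$ takes the common value $\langle\beta,\xi\rangle$ at $t=0$ and $t=1$. Lemma~\ref{lemma:monotony} forces this function to be constant, so $\xi_X(x)=0$, whence $\exp\xi\cdot x=x$ and $g\cdot x=k\cdot x\in K\cdot x$.

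For assertion (2), take $g=k\exp\xi\in G_x$. Then $\exp\xi\cdot x=k^{-1}\cdot x$, and applying $\mup$ (using again $K$-invariance of $\Mup(\beta)$) shows $\exp\xi\cdot x\in\Mup(\beta)$. The same monotonicity argument as above yields $\xi\in\liep_x$, hence $\exp\xi\cdot x=x$ and therefore $k\cdot x=x$, i.e., $k\in K_x$. Combined with the trivial inclusion $K_x\exp(\liep_x)\subset G_x$, this gives $G_x=K_x\exp(\liep_x)$, which is precisely the compatibility of $G_x$ with the Cartan decomposition of $U^\C$.

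I expect no substantive obstacle. The only delicate point is making sure the strict alternative in Lemma~\ref{lemma:monotony} is applied correctly: equality of $\mup^\xi$ at two distinct parameter values along the orbit of the one-parameter subgroup generated by $\xi$ forces the constant case and hence $\xi\in\liep_x$. This trick is exactly where the hypothesis $\beta\in\liep^K$ propagates through the proof via $K$-invariance of $\Mup(\beta)$, and both conclusions of the corollary follow from a single application of it.
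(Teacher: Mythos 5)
Your proof is correct and follows exactly the route the paper indicates (it only sketches the argument as ``use Lemma~\ref{lemma:monotony} and the $K$-equivariance of $\mup$''): the Cartan decomposition $g=k\exp\xi$, the $K$-stability of $\Mup(\beta)$ coming from $\beta\in\liep^K$, and the strict-monotonicity alternative forcing $\xi_X(x)=0$. No issues.
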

\qed

\begin{remark} If $\mup$ is a restricted momentum map and  $\alpha_0\in \liep$ is a $K$-fixed point,
 then $\tilde\mup\colon Z\to \liep$, $\tilde\mu(x)=\mup(x)+\alpha_0$ is also a restricted momentum map for the $G$-action on $Z$
 (see Remarks \ref{remark:slice-points} (\ref{item:remark:slice-points2})). If
 we assume $Z$ to be connected then  up to a additive $K$-invariant constant the restricted momentum map
 $\mup$ is uniquely defined by the Kähler structure on $Z$.
 \end{remark}

In section \ref{section:proof} we need the following.

\begin{proposition}
 \label{proposition:constant} Let $X$ be a $G$-invariant subset of $Z$ and  assume that $\mup$ is
 constant on $X$. Then for every $x\in X$ the Lie algebra $\lieg_x$ of the isotropy group $G_x$
 contains the ideal $\liep+[\liep,\liep]$ of $\lieg$.
\end{proposition}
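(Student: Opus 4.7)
The plan is to exploit the gradient condition orbit by orbit: if $\mup$ is constant on $X$, then for every $\beta\in\liep$ the function $\mup^\beta$ is constant on each $G$-orbit contained in $X$, so its gradient along the orbit vanishes, which by the defining property $\grad\mup^\beta=\beta_X$ forces $\beta_X\equiv 0$ on $X$.

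More concretely, first I would fix $x\in X$ and $\beta\in\liep$. Since $X$ is $G$-invariant, the orbit $G\cdot x$ lies in $X$, and the restriction of $\mup^\beta$ to $G\cdot x$ is constant by hypothesis. Using the passage recalled in the excerpt, that $\grad\mup^\beta=\beta_X$ holds orbitwise with respect to the induced Riemannian metric on $G\cdot x$, I get $\beta_X(x)=\grad(\mup^\beta\vert_{G\cdot x})(x)=0$, so $\beta\in\lieg_x$. Since $\beta\in\liep$ was arbitrary, this gives the inclusion $\liep\subseteq\lieg_x$.

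The rest is purely algebraic: $\lieg_x$ is a Lie subalgebra of $\lieg$, so it is closed under brackets, and hence
\[
[\liep,\liep]\subseteq[\lieg_x,\lieg_x]\subseteq\lieg_x,
\]
which together with $\liep\subseteq\lieg_x$ yields $\liep+[\liep,\liep]\subseteq\lieg_x$. The claim that $\liep+[\liep,\liep]$ is an ideal of $\lieg$ follows from the Cartan bracket relations $[\liek,\liep]\subseteq\liep$ and $[\liep,\liep]\subseteq\liek$ (inherited from $[\lieu,\im\lieu]\subseteq\im\lieu$ and $[\im\lieu,\im\lieu]\subseteq\lieu$ via intersection with $\lieg$) together with the Jacobi identity $[\liep,[\liep,\liep]]\subseteq\liep$ and $[\liek,[\liep,\liep]]\subseteq[\liep,\liep]$.

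There is essentially no obstacle here: the only thing one must be careful about is that the gradient is defined a priori only on the smooth locus of the ambient space, but the statement cited just before the proposition explicitly allows $\grad\mup^\beta=\beta_X$ to be read off on each $G$-orbit — and that is exactly the locus on which we have constancy of $\mup^\beta$. No smoothness or compactness assumption on $X$ itself is needed.
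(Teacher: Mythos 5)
Your proof is correct and follows essentially the same route as the paper: the paper reduces to a single orbit and invokes the identity $\ker d\mup(x)=(\liep\cdot x)^\perp$ (which is itself just a restatement of $\grad\mup^\beta=\beta_X$) to conclude $\liep\cdot x=\{0\}$, exactly as you do directly from the gradient condition. Your explicit verification that $\lieg_x\supseteq[\liep,\liep]$ and that $\liep+[\liep,\liep]$ is an ideal only spells out details the paper leaves implicit.
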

\begin{proof} It is sufficient to show the statement orbitwise. Hence we may assume that $X$ is smooth.
For all $x\in X$ we have  $\ker d\mup(x)=(\liep\cdot x)^\perp$. Since $\mup$ is constant this shows that $\liep \cdot x=\{0\}$ holds. The isotropy Lie algebra $\lieg_x$ then contains the ideal $[\liep, \liep]+\liep$.
\end{proof}

The Lie algebra $\lieg$ of a $\Theta$-compatible subgroup $G$ is real reductive. We have a direct sum decomposition
$\lieg=\lie z(\lieg)\oplus \lieg_1\oplus\ldots \lieg_m$ where $\lie z (\lieg)$ denotes the center of $\lieg$
and the $\lieg_j$ are the simple ideals of the semisimple part of $\lieg$. Since Cartan involutions of semisimple
Lie algebras are unique up to conjugation, we have Cartan decompositions of the Lie algebras
$\lieg_j=\liek_j\oplus \liep_j$ and $\liek=\lie z_\liek \oplus \liek_1\oplus\ldots\oplus \liek_m$ and $\liep=\lie z_\liep\oplus \liep_1\oplus\ldots\oplus \liep_m$ holds where $\lie z_\liek=\lie z(\lieg)\cap \liek$ and $\lie z_\liep=\lie z(\lieg)\cap \liep$. The ideal $\lieg_j$ is said to be compact if $\lieg_j=\liek_j$ and we may assume that
$\lieg_j=\liek_j$ for $j=1,\cdots,l$. The connected Lie subgroup $G_{nc}$ of $G$ with Lie algebra
$\lieg_{nc}=\lie z_\liep\oplus \lieg_{l+1}\oplus \ldots\oplus \lieg_m$ is called the non compact factor of $G$.
The subgroup $G_{nc}$ of $G$ is a normal $\Theta$-compatible subgroup of $G$ and $G/G_{nc}$ is a compact Lie group.

\begin{corollary}
\label{corollary:noncompactfactors}
Assume that  $\mup\colon X\to \liep$ is constant and let $I:=\bigcap_{x\in X}G_x$ denote the ineffectivity of the $G$-action on $X$. Then
\begin{enumerate}
 \item $I$ is an normal subgroup of $G$ which is compatible with the Cartan decomposition,
 \label{item:corollary:noncompactfactors1}
 \item
 \label{item:corollary:noncompactfactors2}
 $I$ contains the non-compact factor of $G$.
 \item \label{item:corollary:noncompactfactors3}
 $G/I$ is a a compact Lie group. In particular every $G$-orbit is compact.
\end{enumerate}
\end{corollary}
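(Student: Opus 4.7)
The plan is to combine Proposition~\ref{proposition:constant} with Corollary~\ref{corollary:isotropycompatibel} and then to analyse the ideal $\liep+[\liep,\liep]$ using the structure theory of real reductive Lie algebras. First I observe that the constant value $\alpha_0:=\mup(x)\in\liep$ is $K$-fixed: by $K$-equivariance of $\mup$, $k\cdot\alpha_0=\mup(k\cdot x)=\alpha_0$ for every $k\in K$. Hence Corollary~\ref{corollary:isotropycompatibel}(2) applies and every isotropy group has the form $G_x=K_x\exp(\liep_x)$ with $\lieg_x$ stable under the Cartan involution $\Theta$.

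For (\ref{item:corollary:noncompactfactors1}), normality of $I$ is automatic from the $G$-invariance of $X$: for $g\in G$ one has $gIg\inv=\bigcap_{x\in X}gG_xg\inv=\bigcap_{x\in X}G_{g\cdot x}=\bigcap_{y\in X}G_y=I$. Compatibility follows from the uniqueness of the Cartan decomposition in $G$: if $g=k\exp\xi\in I$ with $k\in K$, $\xi\in\liep$, then writing $g\in G_x=K_x\exp(\liep_x)$ forces $k\in K_x$ and $\xi\in\liep_x$ for every $x\in X$, so $k\in K\cap I$ and $\xi\in\liep\cap\Lie(I)$. Thus $I=(K\cap I)\exp(\liep\cap\Lie(I))$.

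For (\ref{item:corollary:noncompactfactors2}), Proposition~\ref{proposition:constant} yields $\liep+[\liep,\liep]\subseteq\Lie(I)$, and I would next identify this ideal with $\lieg_{nc}$. Decomposing $\liep=\lie z_\liep\oplus\liep_1\oplus\cdots\oplus\liep_m$ (with $\liep_j=0$ for the compact ideals $j\le l$) and noting that $\lie z_\liep$ commutes with all of $\liep$, one has
\[
\liep+[\liep,\liep]\;=\;\lie z_\liep\,\oplus\,\bigoplus_{j>l}\bigl(\liep_j+[\liep_j,\liep_j]\bigr).
\]
For each simple non-compact ideal $\lieg_j$ the subspace $\liep_j+[\liep_j,\liep_j]$ is $\Theta$-stable (as a sum of the two Cartan eigenspaces $\liep_j$ and $[\liep_j,\liep_j]\subseteq\liek_j$), and a short Jacobi calculation shows it is a Lie ideal in $\lieg_j$; being non-zero, simplicity of $\lieg_j$ forces it to equal $\lieg_j$. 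Hence $\liep+[\liep,\liep]=\lieg_{nc}$, so $\Lie(I)\supseteq\lieg_{nc}$, and since $G_{nc}$ is connected this gives $G_{nc}\subseteq I$.

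Item (\ref{item:corollary:noncompactfactors3}) is then immediate: $G/G_{nc}$ is compact by the definition of the non-compact factor, so $G/I$ is a further quotient of a compact group and hence compact, and every orbit $G\cdot x\simeq G/G_x$ is in turn a quotient of $G/I$. The only step that is not entirely formal is the identification $\liep+[\liep,\liep]=\lieg_{nc}$, i.e.\ the equality $[\liep_j,\liep_j]=\liek_j$ for simple non-compact $\lieg_j$; this is the main technical point and is handled by the ideal argument above.
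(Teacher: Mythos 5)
Your proof is correct and follows essentially the same route as the paper: compatibility of $I$ via intersections of $\Theta$-compatible isotropy groups, and $\liep\subset\Lie(I)$ from the constancy of $\mup$ (via Proposition~\ref{proposition:constant}). The only addition is that you spell out the identification $\liep+[\liep,\liep]=\lieg_{nc}$ (i.e.\ $[\liep_j,\liep_j]=\liek_j$ for the simple non-compact ideals), which the paper leaves implicit; that verification is standard and your ideal argument for it is sound.
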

\begin{proof}
Intersections of $\Theta$-compatible subgroups of $G$ are $\Theta$-compatible. This shows (\ref{item:corollary:noncompactfactors1}).
For (\ref{item:corollary:noncompactfactors2}) we have to show that the Lie algebra $\lie i$ of $I$ contains
$\liep$. Since $\mup$ is constant, we have  $T_x Y=\ker d\mup(x)=\liep\cdot x ^\perp$ on  $Y:=G\cdot x$. This shows $\liep\cdot x=0$ for
all $x\in X$ and implies $\liep\subset \lie i$.
\end{proof}

%%%%%%%%%%%%%%%%%%%%%%%%%%%%%%%%%%%%%%%%%%%%%%%%%%%%%%%%%%
\section{The Slice Theorem}
\label{section:slice}
%%%%%%%%%%%%%%%%%%%%%%%%%%%%%%%%%%%%%%%%%%%%%%%%%%%%%%%%%%

We will use the Slice Theorem of Luna type for Hamiltonian actions several times. It is valid for the action of a $\Theta$-compatible subgroup $G$ of $U^\C$ on $Z$ or slightly more generally for the $G$-action on a $G$-invariant submanifold $X$  of $Z$.  The precise statement is as follows. Recall that the points in a twisted product
$\twist{G}{H}{Y}$ are the $H$-orbits in $G\times Y$ where $H$-acts  by $(h,g,y)\mapsto (gh\inv,hy)$. In the following
we set $[g,y]:=H\cdot (g,y)\in \twist{G}{H}{Y}$ and identify $Y$ with the subset $\{[e,y]: y\in Y\}$
of $\twist{G}{H}{Y}$.

\begin{theorem} \label{theorem:slice}
(Slice Theorem) Let $X$ be a $G$-invariant submanifold of $Z$ and $x\in \Mup(\beta_0)$. Then
\begin{enumerate}
 \item $G_x=K_x\exp\liep_x$ is a $\Theta$-compatible subgroup of $G$ (see Corollary \ref{corollary:isotropycompatibel})
 \item For any $G_x$-equivariant splitting $T_xX=\lieg\cdot x\oplus W$ of the $G_x$-representation $T_xX$ there exists
 an open $G_x$-invariant neighborhood $U$ of $0$ in $W$,
 an open $G$-invariant neighborhood $\Omega$ of $x$ in $X$ and a $G$-equivariant
 diffeomorphism $\Psi\colon \twist{G}{G_x}{U}\to \Omega$ such that $\Psi([e,0])=x$
 \item In the complex case  the splitting can be chosen to be $\C$-linear and the map $\Psi$ is
 then a $G$-equivariant biholomorphism.
\end{enumerate}
\end{theorem}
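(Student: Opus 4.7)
The plan is to handle (1) as an immediate consequence of Corollary~\ref{corollary:isotropycompatibel}, build a $G_x$-equivariant local model $\Psi_0$ from the given splitting and then $G$-equivariantly extend it to $\Psi$ for (2), and finally upgrade to the holomorphic category for (3). For (1): since $\mup$ is $K$-equivariant, $K_x$ stabilizes $\beta_0:=\mup(x)$, so $G_x\subseteq G^{\beta_0}$. The centralizer $G^{\beta_0}$ is $\Theta$-compatible because $\beta_0\in\liep$, and $\pi_{\liep^{\beta_0}}\circ(\mup-\beta_0)$ is a $G^{\beta_0}$-momentum map on $X$ (the shift by $\beta_0\in\liep^{K^{\beta_0}}$ is allowed) vanishing at $x$. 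Applying Corollary~\ref{corollary:isotropycompatibel} to the $G^{\beta_0}$-action with this momentum map and $\beta=0$ then yields $G_x=(G^{\beta_0})_x=K_x\exp\liep_x$.

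For (2), I fix the given $G_x$-equivariant splitting $T_xX=\lieg\cdot x\oplus W$ and aim to produce a $G_x$-equivariant smooth map $\Psi_0:U\to X$ on a $G_x$-invariant neighborhood $U$ of $0\in W$ with $\Psi_0(0)=x$ and $(d\Psi_0)_0=\id_W$. The Riemannian exponential of any $K$-invariant metric on $X$ provides such a map $K_x$-equivariantly. To upgrade to full $G_x$-equivariance, I would linearize the reductive $G_x$-action near its fixed point $x$: Bochner linearization handles the compact piece $K_x$, and for $\xi\in\liep_x$ the identity $\grad\mup^\xi=\xi_X$ forces the linearization of $\xi_X$ at $x$ to be the self-adjoint operator $\mathrm{Hess}_x\mup^\xi$, so the $\exp\liep_x$-flow is smoothly conjugate to its linear action on $W$ and can be equivariantly glued to the $K_x$-equivariant slice via the Cartan decomposition $G_x=K_x\exp\liep_x$. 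Setting $\Psi:\twist{G}{G_x}{U}\to X$, $\Psi([g,v]):=g\cdot\Psi_0(v)$, the differential at $[e,0]$ is the isomorphism $\lieg/\lieg_x\oplus W\cong\lieg\cdot x\oplus W=T_xX$, so $\Psi$ is a local diffeomorphism at $[e,0]$.

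The main obstacle is extending this local diffeomorphism to a genuine diffeomorphism onto an open $G$-invariant neighborhood $\Omega$, i.e., establishing global injectivity on some $\twist{G}{G_x}{U'}$. My plan is a proof by contradiction: suppose one finds sequences $[g_n,v_n]\neq[e,w_n]$ with $v_n,w_n\to 0$ and $\Psi([g_n,v_n])=\Psi([e,w_n])$, and Cartan-decompose $g_n=k_n\exp(\eta_n)$ with $k_n\in K$, $\eta_n\in\liep$. Compactness of $K$ lets me extract $k_n\to k_\infty\in K$, and the monotonicity of $\mup^\eta$ along $\exp(t\eta)$-orbits (Lemma~\ref{lemma:monotony}), combined with boundedness of $\mup$ on the compact $X$ and the fact that $\Psi([g_n,v_n])\to x$, forces $\eta_n\to 0$. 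Passing to the limit yields $k_\infty\cdot x=x$, hence $k_\infty\in K_x$, contradicting the local injectivity of $\Psi$ at $[e,0]$. For (3), I would choose $W$ to be a $\C$-linear $G_x$-invariant complement (available since $\lieg\cdot x$ is a complex subspace of $T_xX$ and reductive $G_x$-representations on complex vector spaces decompose $\C$-linearly) and replace the Riemannian exponential by a $G_x$-equivariant \emph{holomorphic} slice obtained from a holomorphic linearization of the reductive $G_x$-action near the fixed point; the extension to $\Psi$ and the injectivity argument then carry over verbatim, producing a $G$-equivariant biholomorphism.
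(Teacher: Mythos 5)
The paper offers no proof of Theorem~\ref{theorem:slice} beyond the citation of \cite{HS}, so the only meaningful comparison is with the argument there; measured against it, your outline has the right skeleton (local model $\twist{G}{G_x}{U}$, local diffeomorphism at $[e,0]$, injectivity via monotonicity of $\mup^{\xi}$ along $\exp(t\xi)$-orbits), but the steps that carry the actual difficulty are asserted rather than proved, and the reduction in~(1) rests on a false claim. Concretely: from $K$-equivariance of $\mup$ you only get $K_x\subseteq K^{\beta_0}$, not $G_x\subseteq G^{\beta_0}$, and the latter is false in general; indeed statement~(1) itself fails for arbitrary $\beta_0$ --- for $G=\SL(2,\C)$ on $Z=\mathbb{P}^1$ and $x=[1:0]$ one has $\mup(x)=\beta_0\neq 0$ and $G_x$ is a Borel subgroup, which is not $\Theta$-compatible. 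The theorem has to be read with the hypothesis of Corollary~\ref{corollary:isotropycompatibel}, namely $\beta_0\in\liep^{K}$ (this is the only situation in which the paper invokes it, cf.\ Remarks~\ref{remark:slice-points}); under that hypothesis your detour through $G^{\beta_0}$ is unnecessary and Corollary~\ref{corollary:isotropycompatibel} applies directly after the shift.

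In~(2) there are two genuine gaps. First, the construction of the $G_x$-equivariant germ $\Psi_0$: Bochner linearization handles $K_x$, but ``the $\exp\liep_x$-flow is smoothly conjugate to its linear action and can be equivariantly glued via the Cartan decomposition'' is not an argument. Smooth linearization of a flow at a fixed point whose linearization is self-adjoint with nontrivial kernel is exactly the kind of statement that fails for general vector fields (resonances, center directions), and $G_x=K_x\exp\liep_x$ is only a diffeomorphism of manifolds, not a group decomposition, so separate $K_x$- and $\exp\liep_x$-linearizations in two different charts do not assemble into $G_x$-equivariance. In \cite{HS} this step uses the holomorphic structure of the ambient $Z$ (linearization of the complexified isotropy and restriction), not real dynamical linearization. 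Second, and more seriously, the injectivity step is the theorem: writing $\eta_n=s_n\xi_n$ with $\Vert\xi_n\Vert=1$, monotonicity only gives $\int_0^{s_n}\Vert(\xi_n)_X(\exp(t\xi_n)\cdot z_n)\Vert^2\,dt\le 2\sup_X\Vert\mup\Vert$, and to deduce boundedness of $s_n$ from this one needs a locally uniform positive lower bound for $\Vert\xi_X\Vert^2$ away from the zero set, uniformly in the unit vector $\xi$ modulo $\liep_x$ --- the properness of the Kempf--Ness functional that constitutes the technical core of \cite{HS} and is simply missing here. Note also that ``$\eta_n\to 0$'' is not even the correct target: when $\liep_x\neq\{0\}$ the representative $(g_n,v_n)$ of a class in the twisted product can be modified by unbounded elements of $G_x$, so at best one can hope for boundedness of $\eta_n$ modulo $\lieg_x$ after a careful choice of representatives. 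The same two issues reappear verbatim in your treatment of~(3). So the proposal is a plausible roadmap, but not a proof; the honest course is to do what the paper does and quote \cite{HS}, or to reproduce its properness estimates in full.
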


\proof This shown in \cite{HS}.
\qed

\begin{remarks}
\label{remark:slice-points}
\begin{enumerate}

\item
\label{item:remark:slice-points1}
 The $G_x$-representation $T_xX$ is completely reducible for $x\in\Mup(\beta_0)$.This follows
 from the fact that every $\alpha\in \liep_x$ is represented
 on $W=T_xX$ as the  self-adjoint operator with respect to the $K_x$-invariant inner product $(\ ,\ )_x$
 on $T_xX$ (see \cite{HS}).

 \item
 \label{item:remark:slice-points2}
 The proof of the Slice Theorem can be reduced to the case where $\beta_0=0$. This is done using the shifting method
 which is also available for the $G$-action on $X$ and is as follows.
 For any $\beta_0=\mup(x)$ we replace
 $Z$ bei $Z\times O$, where $O=U\cdot \beta_0$. The manifold $O$ is a coadjoint orbit and it is a generalized $U^\C$-homogeneous flag manifold. The action of
 $U$ on $O$  is Hamiltonian with momentum map
 $\mu_O(y)=-y$. Since $O$ is a compact  K\"ahler manifold, the $U$-action extends to a holomorphic $U^\C$-action.
 In particular the group $G$ acts on $O$ by holomorphic transformations and there is exactly one closed
 $G$-orbit which is a $K$-orbit (see \cite{W}) in $O$. It turns out that $K\cdot \beta_0$ is the unique closed $G$-orbit
 in  $O$. On $Z\times O$ we have the product K\"ahler form with product momentum map $\mu+\mu_O$. The zero fibre contains
 $(x,\beta_0)$. In the case where $\beta_0$ is a $K$-fixed point, we can replace $Z$ be $Z\times O$ and identify $X$
 with the submanifold $X\times \{\beta_0\}$ of $Z\times O$. Then we can use the Slice Theorem for the zero fibre of the new
 momentum mapping.

 \item
  \label{item:remark:slice-points3}
 If $x\in X$ is a $G$-fixed point, then $\beta_0:=\mup(x)$ is a $K$-fixed point. The Slice Theorem is in this case a linearization theorem  for the $G$-action in a $G$-invariant neighborhood of $x$. It implies that the set of
 $G$-fixed points $X^G$ is a smooth submanifold of $X$.

 \item
 \label{item:remark:slice-points4}
 If in the complex case $x$ is a $K$-fixed point then it is $G=K^\C$-fixed. In this case the $G$-action can be holomorphically
 linearized.

 \item
 \label{item:remark:slice-points5}
 For a commutative $\Theta$-compatible subgroup of $G$ the Slice Theorem holds
 at every $x\in X$. In particular it holds for every subgroup $\Gamma_\beta:=\exp (\R\beta)$ where $\beta\in\liep$.

\item
\label{item:remark:slice-points6}
If $Y$ is a $G$-invariant subset of $Z$ and $x\in \Mup(\beta_0)\cap Y$ then the Slice theorem for $Z$ gives by pulling back a Slice Theorem for $Y$. The map $\Psi\colon \twist{G}{G_x}{U_Y}\to \Omega\cap Y$ is a $G$-equivariant homeomorphism, where $U$ is identified with $\{[e,u]\in \twist{G}{G_x}{U}: u\in U\}$
and $U_Y:=\Psi\inv(Y\cap \Psi(U))$.
 \end{enumerate}
\end{remarks}

%%%%%%%%%%%%%%%%%%%%%%%%%%%%%%%%%%%%%%%%%%%%%%%%%%%%%%%%%%%%%%%%%%%%%%
\section{Parabolic subgroups}
\label{section:parabolic}
%%%%%%%%%%%%%%%%%%%%%%%%%%%%%%%%%%%%%%%%%%%%%%%%%%%%%%%%%%%%%%%%%%%%%%
Let $G$ be a $\Theta$-compatible subgroup of $U^\C$ and $G=K\exp\liep$ the Cartan
decomposition of $G$.

Let $\beta\in\liep$. We have the vector field $\beta_G$ whose
one-parameter subgroup is given by
$(t,y)\mapsto
\exp(t\beta)y\exp(-t\beta)$. Then
$$G^\beta=\{y\in G: \beta_G(y)=0\}=\{y\in G: \exp(t\beta)y\exp(-t\beta)=y \text{ for
  all $t\in \R$}\}
  $$
  is the centralizer of $\beta$ in $G$.
We have the  parabolic subgroup
\[
G^{\beta_+}:=\{y\in G:
\lim_{t\to-\infty}\exp(t\beta)y\exp(-t\beta)\text{ exists}\}
\]
with unipotent radical
\[
R^{\beta_+}:=\{y\in G:
\lim_{t\to-\infty}\exp(t\beta)y\exp(-t\beta)=e\}.
\]
Then $G^{\beta_+}$ is the semi-direct product of $G^\beta$ with
$R^{\beta_+}$ and we have the projection
  $\pi^{\beta_+}\colon G^{\beta_+}\to G^\beta$,
$\pi^{\beta_+}(y):=\lim_{t\to-\infty}\exp(t\beta)y\exp(-t\beta)$.
The opposite of $G^{\beta_+}$ is the parabolic subgroup
$G^{\beta_-}=\Theta(G^{\beta_+})=G^\beta\cdot R^{\beta_-}$, where
\[
R^{\beta_-}:=\{y\in G:
\lim_{t\to+\infty}\exp(t\beta)y\exp(-t\beta)=e\}.
\]
We introduce subsets  of a $G$-invariant closed subset $X$ of $Z$  analogous to $G^\beta$,
$G^{\beta_+}$ and $G^{\beta_-}$. For $\beta\in\liep$ we have the
vector field $\beta_X$ on $X$ along $G$-orbits whose one-parameter subgroup is
given by $(t,y)\mapsto \exp(t\beta)\cdot y$.  We have the set of $\Gamma_\beta:=\exp\R\beta$ fixed points $X^\beta=\{y\in X:\beta_X(y)=0\}=X\cap Z^\beta$. For a real number $r$ let
$X{(\beta,r)}:=\{x\in X:  \overline{\exp\R\beta\cdot x}\cap(\mu^\beta)^{-1}(r)\not=\emptyset\}$.
Since $X$ is closed in $Z$ these subsets are also given by intersecting the analogous subsets of $Z$ with $X$.

The set $X{(\beta,r)}$ can be viewed as a set of semistable points in $X$ for the
$\Gamma_\beta=\exp\R\beta$-action on $X$ in the sense of \cite{HSS}. Note that $\Gamma_\beta$ is a $\Theta$ compatible subgroup of $G$. In particular $X{(\beta,r)}$
is an open subset of $X$ and there is a topological Hilbert quotient $\quot{X{(\beta,r)}}{\Gamma_\beta}$ (\cite{HS}).

 Since $\grad \mu^\beta=\beta_Z(x)$ the set
\[
 X_r^\beta:= X{(\beta,r)}\cap Z^\beta
\]
is open and closed in $X^\beta$. We have  $G^\beta=K^\beta\exp\liep^\beta$ and $\mup^\beta$
is $K^\beta$-invariant. This implies that $G^\beta$ stabilizes $X^\beta_r$ and therefore also the subset
\[
X^{\beta_-}_r:=\{y\in X{(\beta,r)}: \lim_{t\to + \infty}\exp t\beta\cdot y\text{
  exists in } X{(\beta,r)} \}.
\]
In particular $X^{\beta_-}_r$ ist a $G^{\beta_-}$-stable set
and we also have
the $G^{\beta_+}$-stable set
\[
X^{\beta_+}_r:=\{y\in X{(\beta,r)}: \lim_{t\to - \infty}\exp t\beta\cdot y\text{
  exists in } X{(\beta,r)} \}.
\]

The map $p^{\beta_+}\colon X^{\beta_+}_r\to X_r^\beta$,
$p^{\beta_+}(y)=\lim_{t\to -\infty}\exp t\beta\cdot y$ is well
defined, $G^\beta$-equivariant, surjective and its fibers are
$R^{\beta_+}$-stable.
Similarly we have the map $p^{\beta_-}\colon X^{\beta_-}_r\to X_r^\beta$,
$p^{\beta_-}(y)=\lim_{t\to +\infty}\exp t\beta\cdot y$ which is a well
defined, $G^\beta$-equivariant, surjective map whose fibers are
$R^{\beta_-}$-stable. Of course for most $r$ these sets are empty, but if not then
they are locally closed in  $X$.  In the smooth case they are locally closed submanifolds of $X$.

\begin{remark}
 One  can define the above subsets for any $G$-invariant subset $X$ of $Z$. In this case we have
 for $X^\beta_r=Z_r^\beta\cap X$ and $X^{\beta_-}_r\subset Z^{\beta_-}_r\cap X$. If $X$ is closed
 in $Z$, then $X^{\beta_-}_r = Z^{\beta_-}_r\cap X$ holds and similar for the $+$-case.
\end{remark}

The Slice Theorem applied to the $\Gamma_\beta$ action at $x\in X^\beta$
shows the following.

\begin{proposition}
 \label{proposition:slicetheoremforgamma} Let $X$ be a smooth $G$-invariant submanifold of $Z$ and let $x\in X^\beta$.
 \begin{enumerate}
 \label{proposition:slicetheoremforgamma1}
  \item If $x\in X^\beta$, then we have $T_xX=W^{\beta_-}\oplus W^\beta\oplus W^{\beta_+}$ where $W^{\beta_-}=T_x(p^{\beta_-})\inv(x)$
 is the sum of strictly negative eigenspaces,  $W{^\beta_+}=T_x((p^{\beta_+})\inv(x))$
 is the sum of strictly positive eigenspaces and $W^\beta$ is the sum of zero eigenspaces of the linearized vector field $d\beta_X(x)$.
 The point $x$ has an open $\Gamma_\beta$-stable neighbourhood which can be equivariantly and smoothly identified with an open
 invariant neighbourhood of zero in $T_xX$.
 \item
 \label{proposition:slicetheoremforgamma2} In the complex case let $\Gamma_\beta^c$ denote the complex Zariski closure of
 $\Gamma_\beta$. Then $x$ is fixed by the algebraic torus $\Gamma_\beta^c$ and the splitting is $\Gamma_\beta^c$-invariant.
 \end{enumerate}
 \end{proposition}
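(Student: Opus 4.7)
The plan is to reduce Proposition~\ref{proposition:slicetheoremforgamma} entirely to the Slice Theorem applied to $\Gamma_\beta=\exp(\R\beta)$. Since $\Gamma_\beta$ is commutative and $\Theta$-compatible, Remark~\ref{remark:slice-points}(\ref{item:remark:slice-points5}) grants the Slice Theorem for the $\Gamma_\beta$-action at every point of $X$. At $x\in X^\beta$ the isotropy is all of $\Gamma_\beta$, so $\lieg_x\cdot x=0$ and the splitting $T_xX=\lieg_x\cdot x\oplus W$ in Theorem~\ref{theorem:slice} forces $W=T_xX$. This yields a $\Gamma_\beta$-equivariant smooth diffeomorphism between an open invariant neighbourhood of $0$ in $T_xX$ and an open $\Gamma_\beta$-stable neighbourhood of $x$, on which the flow of $\beta_X$ is conjugated to the linear flow $t\mapsto e^{tA}$ with $A:=d\beta_X(x)$. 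This immediately gives the final sentence of part (1).

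Next I would analyse $A$ and the eigenspace decomposition. Because $x$ lies in the zero set $X^\beta$ of $\beta_X=\grad\mup^\beta$, the linearization $A$ equals the Hessian of $\mup^\beta$ at $x$ and is therefore self-adjoint with respect to $(\,,\,)_x$ (this is precisely the content of Remarks~\ref{remark:slice-points}(\ref{item:remark:slice-points1}) applied to $\beta\in\liep_x$). Hence $T_xX$ splits orthogonally into real eigenspaces, and I define $W^{\beta_-}$, $W^\beta$, $W^{\beta_+}$ as the sums of strictly negative, zero, and strictly positive eigenspaces. Under the slice identification an eigenvector $v$ of eigenvalue $\lambda$ satisfies $\lim_{t\to+\infty}\exp(t\beta)\cdot v=0$ iff $\lambda<0$ and $\lim_{t\to-\infty}\exp(t\beta)\cdot v=0$ iff $\lambda>0$. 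Restricting to the model neighbourhood, the fibre $(p^{\beta_-})^{-1}(x)$ corresponds to a neighbourhood of $0$ in $W^{\beta_-}$ and symmetrically for $W^{\beta_+}$, which identifies these summands with the tangent spaces of the fibres as asserted.

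For part (2), in the complex case the one-parameter subgroup $\lambda\mapsto\exp(\lambda\beta)$ extends to a holomorphic homomorphism $\C\to G$, so the orbit map $f\colon\C\to X$, $f(\lambda)=\exp(\lambda\beta)\cdot x$, is holomorphic. Since $x\in X^\beta$ the map $f$ is constant on $\R$, and the identity principle forces $f$ to be constant on $\C$; thus $x$ is fixed by the algebraic torus $\Gamma_\beta^c=\exp(\C\beta)$. The Slice Theorem in the complex case provides a $\C$-linear, biholomorphic slice identification, and under it the $\Gamma_\beta^c$-action on $T_xX$ is the holomorphic extension of the $\Gamma_\beta$-action generated by $A$, which is therefore $\C$-linear. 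The eigenspaces of the $\C$-linear self-adjoint operator $A$ are complex subspaces and are preserved by any operator commuting with $A$, so in particular they are $\Gamma_\beta^c$-invariant.

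The only point requiring any care is part (2): one must verify the $\C$-linearity of $A$ and the compatibility of the real eigenspaces with the complex torus action, and both follow from the complex Slice Theorem together with the identity-principle argument above. The rest is a direct unpacking of Theorem~\ref{theorem:slice} and of the definitions of $X^{\beta_\pm}_r$ and $p^{\beta_\pm}$ recalled in Section~\ref{section:parabolic}.
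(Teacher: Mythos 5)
Your overall route is the same as the paper's: the paper offers no written proof beyond the sentence that the Slice Theorem applied to the $\Gamma_\beta$-action at $x\in X^\beta$ yields the proposition, and your part (1) is exactly the correct unpacking of that (fixed point $\Rightarrow$ slice $W=T_xX$; self-adjointness of $d\beta_X(x)$ from Remarks \ref{remark:slice-points}(\ref{item:remark:slice-points1}) $\Rightarrow$ real eigenspace splitting; stable/unstable sets of the linear flow $\Rightarrow$ identification of $W^{\beta_\pm}$ with the fibre tangent spaces). The one slip, $\lieg_x\cdot x$ where you mean $\lieg\cdot x=\R\beta\cdot x$, is harmless here since both vanish.

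There is, however, one genuine imprecision in part (2): you write $\Gamma_\beta^c=\exp(\C\beta)$, but the complex Zariski closure of $\exp(\R\beta)$ is in general strictly larger than the one-parameter group $\exp(\C\beta)$ (e.g.\ for $\beta=\mathrm{diag}(1,\sqrt2)$ the Zariski closure is the full two-dimensional torus). Since the $U^\C$-action on $Z$ is only holomorphic, not algebraic, you cannot invoke Zariski-closedness of the isotropy group directly, so your identity-principle argument only shows that $\exp(\C\beta)$ fixes $x$. The standard repair: $x$ is then fixed by $\exp(i\R\beta)\subset U$, hence by the compact torus $T:=\overline{\exp(i\R\beta)}\subset U$, hence by its complexification $T^\C$; the latter is an algebraic subtorus of $U^\C$ containing $\Gamma_\beta$, so it contains $\Gamma_\beta^c$, which therefore fixes $x$ and acts linearly on $T_xX$ commuting with $A=d\beta_X(x)$, giving the $\Gamma_\beta^c$-invariance of the eigenspace splitting. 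With this insertion your proof is complete.
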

\qed

\begin{remark} The proposition applies in particular to $Z$. Note that if $x\not\in Z^\beta$, then Lemma \ref{lemma:monotony} implies that $\Gamma_\beta$ acts freely in a neighbourhood of $x$ and the
 Slice Theorem gives in this case a neighbourhood isomorphic with a product $\Gamma_\beta\times S$.
 Also note that in the complex case the $\Gamma_\beta^c$-isotropy group at $x$  need not be trivial.
\end{remark}

%%%%%%%%%%%%%%%%%%%%%%%%%%%%%%%%%%%%%%%%%%%%%%%%%%%%%%%%%%%%%%%%%%%%%%%%%%%%%%
\section{Extreme points}
%%%%%%%%%%%%%%%%%%%%%%%%%%%%%%%%%%%%%%%%%%%%%%%%%%%%%%%%%%%%%%%%%%%%%%%%%%%%%%
In this section  $X$ denotes a compact $G$-invariant subset of $Z$. As before the group $G$ is assumed to  be compatible with $\Theta$.

Let $\sigma$ be an extreme point of the convex envelope $E$ of $\mup(X)$  and $K^\sigma$ the centralizer of $\sigma$ in $K$.

\begin{remark}
 \label{remark:polytop}
 Let $\liea$ be a maximal subalgebra of $\lieg$ which is contained in $\liep$. The convex envelope $A=\exp \liea$ is a $\Theta$-compatible subgroup of $G$. The convex envelope $P$
 of $\mua(X)$ is simpler to understand than $E$ and encodes all relevant information about
 the extreme points of $E$. The convex body $P$ is the convex envelope of
 the finite subset $\mu_\liea(X^A)$ and is therefore a convex polytope. Every extreme point $\sigma$ of $E$ lies in the $K$-orbit  of an extreme point of $P$ which up to the action of the Weyl group on $\liea$ is uniquely defined by $\sigma$ (see \cite{BGH2} for details).

 In the complex connected case it is well known that $\mua(X)$ is convex (\cite{GS,atiyah}). In the general compact smooth case it is only known that $\mua(X)$ is a finite union of convex polytopes. Convexity is an open question.
 (see \cite{HSch}).
\end{remark}

Every face of $E$ is exposed (\cite{BGH2}). In particular,  the set  $C_\sigma$ of $\beta\in\liep$ which expose $\sigma$ is
non empty. By definition $\beta\in \liep$ exposes $\sigma$ if $E\cap H^+_{\langle  \beta,\sigma\rangle}(\beta)=\{\sigma\}$ where for any $r\in\R$ we set
$H_r^+(\beta):=\{\alpha\in \liep: \langle \beta, \alpha\rangle\ge r \}$. The set
$C_\sigma$ is a $K^\sigma$-invariant cone in $\liep$ and since $K^\sigma$ is compact and acts linearly on $\liep$ the set
$C_\sigma^{K^\sigma}$ of $K^\sigma$-fixed points in $C_\sigma$ is non empty.

\begin{proposition}
 \label{proposition:extreme}
 Let $\sigma$ be an extreme point of $E$.
 \begin{enumerate}
  \item
  \label{item:extreme1}
  For every $\beta\in C_\sigma$ we have $\mup\inv (\sigma)=X_{\max}^\beta:=\{x\in X:\mup^\beta(x)=\max\mup^\beta(X)\}$.
  \item
  \label{item:extreme2}
  $X_{\max}^\beta$ is open and closed in  $X^\beta$ and $G^\beta$-stable.
    \item
  \label{item:extreme3}
   For $\beta\in C_\sigma$ every $G^\beta$-orbit in $X^\beta_{\max}$ is compact and $X^\beta_{\max}\subset X^{\liep^\beta}$.
    \item
  \label{item:extreme4}
  For $\beta\in C_\sigma$ and all $x\in X^\beta_{\max}$ we have $\lie r^{\beta_+}\cdot x=\{0\}$
   \item
  \label{item:extreme5}
   For $\beta\in C_\sigma^{K^\sigma}$ we have $K^\sigma=K^\beta$.
  \item
  \label{item:extreme6}
  For $\beta\in C_\sigma^{K^\sigma}$ and $x\in X^\beta_{\max}$  we have $\xi_X(x)\not=0$ for all non zero
  $\xi\in \lie r^{\beta_-}$.
 \end{enumerate}
\end{proposition}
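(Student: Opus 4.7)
Parts (1) and (5) are immediate. For (1), the exposing property $\beta\in C_\sigma$ says $\sigma$ is the unique maximizer of $\langle\beta,\cdot\rangle$ on $E$, hence on $\mup(X)\subseteq E$, so $X_{\max}^\beta=\mup\inv(\sigma)$. For (5), $K^\sigma\subseteq K^\beta$ is automatic from the $K^\sigma$-invariance of $\beta$; conversely, for $k\in K^\beta$ one has $\langle\beta,k\cdot\sigma\rangle=\langle k\inv\beta,\sigma\rangle=\langle\beta,\sigma\rangle$, so the exposing property forces $k\cdot\sigma=\sigma$, giving $K^\beta\subseteq K^\sigma$.

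Parts (2)--(4) are standard momentum-map manipulations. For (2), Lemma~\ref{lemma:monotony} places the maximum of $\mup^\beta$ inside $X^\beta$; since $\grad\mup^\beta=\beta_X$ vanishes on $X^\beta$, $\mup^\beta$ is locally constant there, so $X_{\max}^\beta$ is a union of connected components of $X^\beta$, giving the clopen statement. The $K^\beta$-invariance of $\mup^\beta$ (from the $K$-equivariance of $\mu$) together with the fact that the connected group $\exp\liep^\beta$ preserves $X^\beta$ and hence each of its components, yields $G^\beta$-stability. For (3), $\mup$ is constantly $\sigma$ on $X_{\max}^\beta$, so the restricted $G^\beta$-momentum map $\pi_{\liep^\beta}\circ\mup$ is constant on each $G^\beta$-orbit; Proposition~\ref{proposition:constant} then yields $\liep^\beta\subseteq\lieg_x$ (i.e.\ $X_{\max}^\beta\subseteq X^{\liep^\beta}$) and Corollary~\ref{corollary:noncompactfactors} yields compactness of every $G^\beta$-orbit. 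For (4), given $\xi\in\lie r^{\beta_+}$ and $x\in X_{\max}^\beta\subseteq X^\beta$, the curve $\exp(t\beta)\cdot\exp(\xi)\cdot x=\exp(t\beta)\exp(\xi)\exp(-t\beta)\cdot x$ tends to $x$ as $t\to-\infty$ by the defining property of $R^{\beta_+}$, while monotonicity of $\mup^\beta$ along the $\Gamma_\beta$-orbit through $\exp(\xi)\cdot x$ forces $\exp(\xi)\cdot x\in X_{\max}^\beta\subseteq X^\beta$, which must then equal its own limit $x$. Rescaling $\xi$ inside the linear subspace $\lie r^{\beta_+}$ upgrades this to $\exp(t\xi)\cdot x=x$ for all $t\in\R$, giving $\xi_X(x)=0$.

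Part (6) is the main obstacle. The Hessian of $\mup^\beta$ at the local maximum $x$ coincides with the symmetric operator $d\beta_X(x)$ on $T_xX$, whose eigenvalues are the $\beta$-weights; negative semidefiniteness at a max therefore kills the positive slice $W^{\beta_+}$ of Proposition~\ref{proposition:slicetheoremforgamma}, giving $T_xX=W^{\beta_-}\oplus W^\beta$. The orbit map $\lieg\to T_xX$ is $\operatorname{ad}\beta$-equivariant and sends $\lie r^{\beta_-}$ into $W^{\beta_-}$; injectivity of this restriction is the claim $\lie r^{\beta_-}\cap\lieg_x=0$. Suppose $\xi$ lies in this intersection; by (4) we have $\Theta\xi\in\lie r^{\beta_+}\subseteq\lieg_x$, so both $\xi_\liek$ and $\xi_\liep$ belong to $\lieg_x$. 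To force $\xi=0$ I would apply the shifting trick at $(x,\sigma)\in X\times(U\cdot\sigma)$: the shifted restricted momentum value is the $K$-fixed point $0\in\liep$, so Corollary~\ref{corollary:isotropycompatibel} makes $G_x\cap G^\sigma$ a $\Theta$-compatible subgroup. Combining this with the $K^\sigma=K^\beta$-invariance of $\beta$ from (5) and the structure of the $\operatorname{ad}\beta$-grading on $\lieg^\sigma$, the plan is to upgrade $\xi\in\lieg_x$ to $\xi\in\lieg_x\cap\lieg^\sigma$, and then exploit $\Theta$-compatibility of the latter --- which swaps the $\pm$ $\operatorname{ad}\beta$-eigenspaces --- together with the inclusion $\lie r^{\beta_+}\subseteq\lieg_x$ coming from (4) to rule out $\xi\ne 0$. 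The hardest step, and the place where the extremality of $\sigma$ genuinely enters, is establishing the containment $\lie r^{\beta_-}\cap\lieg_x\subseteq\lieg^\sigma$, which appears to require the structural results on exposed faces from \cite{BGH2}.
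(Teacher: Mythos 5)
Parts (1)--(5) of your argument are correct. Your proof of (4) (sending $t\to-\infty$ along $\exp(t\beta)\exp(\xi)\exp(-t\beta)\cdot x$ and invoking Lemma \ref{lemma:monotony}) and your proof of (5) (via the exposing property rather than the $K$-equivariance of $\mup$) take slightly different routes from the paper, which handles (4) by applying Proposition \ref{proposition:slicetheoremforgamma} to $G\cdot x$ and observing $W^{\beta_+}=\{0\}$ at a maximum of $\mup^\beta$; both versions are valid and of comparable difficulty.

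The genuine gap is part (6), which you leave as a plan rather than a proof, and the plan points in the wrong direction: the containment $\lie r^{\beta_-}\cap\lieg_x\subseteq\lieg^\sigma$ that you single out as the hardest step is not needed, and neither are the shifting trick nor the structural results of \cite{BGH2}. You already hold all the ingredients. Suppose $\xi\in\lie r^{\beta_-}$ with $\xi_X(x)=0$. By (4) the element $\Theta(\xi)\in\lie r^{\beta_+}$ also annihilates $x$, so $\xi+\Theta(\xi)\in\liek_x$. Now $\liek_x\subseteq\liek^\sigma$ simply because $\mup$ is $K$-equivariant and $\mup(x)=\sigma$, and by (5) we have $\liek^\sigma=\liek^\beta\subseteq\lieg^\beta$. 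In the direct sum $\lieg=\lie r^{\beta_-}\oplus\lieg^\beta\oplus\lie r^{\beta_+}$ the element $\xi+\Theta(\xi)$ has $\lie r^{\beta_-}$-component exactly $\xi$; since it lies in $\lieg^\beta$, that component vanishes, i.e.\ $\xi=0$. The point you miss is that the object which must be forced into $\lieg^\beta$ is the $\liek$-part $\xi+\Theta(\xi)$, not $\xi$ itself, and it is the directness of the triangular decomposition --- not the $\Theta$-compatibility of an isotropy group --- that closes the argument.
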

\proof
Part (\ref{item:extreme1}) follows directly from the definitions.
Since $\mu^\beta$ is locally constant on $X^\beta$ we have the first part of (\ref{item:extreme2}).
The function  $\mu^\beta$ is $K^\beta$-invariant and $X^\beta$ is $G^\beta$-stable. This shows that  $X^\beta_{\max}$
is $G^\beta$-stable.

Since $\mup\vert X^\beta_{\max}$ is constant by (\ref{item:extreme1}) the restricted momentum map
for the $G^\beta$-action on $X^\beta_{\max}$ is constant. Proposition \ref{proposition:constant} now implies
(\ref{item:extreme3}).

In (\ref{item:extreme4}) we apply Proposition \ref{proposition:slicetheoremforgamma} in the case where $X=G\cdot x$. Using the notations there we have $W^{\beta_+}=\{0\}$ for every $x\in X^\beta_{\max}$. This implies
$\lie r^{\beta_+}\cdot x=\{0\}$.

If $\beta$ is $K^\sigma$-fixed, then we have $K^\sigma\subset K^\beta$. Since $\mup$ is $K$-equivariant and $K^\sigma$ is the
$K$-isotropy group at $\sigma$ we also have $K^\beta\subset K^\sigma$. This shows (\ref{item:extreme5}).

Let $\xi\in \lie r^{\beta_-}$ be such that $\xi_X(x)=0$. Since $r^{\beta_+}\cdot x=\{0\}$ and
$\theta(\xi)\in \lie r^{\beta_+}$ we get  that $(\xi+\theta(\xi))_X(x)=0$. Then we have
that $\xi +\theta(\xi)\in\liek_x\subset \lie k^\sigma=\liek^\beta\subset \lieg^\beta\subset \lie r^{\beta_-}+\lieg^\beta+\lie r^{\beta+}$.
Since the last sum is direct we obtain $\xi=0$.
\qed

\begin{corollary}
\label{corollary:proposition:extreme}
Let  $\beta\in C_\sigma$.
\begin{enumerate}

  \item For all $x\in X^{\beta}_{\max}$ the orbit $G\cdot x$ is a $K$-orbit.
 \item In the smooth case $X^{\beta_-}_{\max}$ is open in $X$.
 \item If $X$ is connected and we are in the complex case then $X^{\beta_-}_{\max}$ is dense in $X$.
  \end{enumerate}
\end{corollary}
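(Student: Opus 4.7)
The plan is to use Proposition~\ref{proposition:extreme}(\ref{item:extreme3}) and (\ref{item:extreme4}) to establish the infinitesimal identity $\liep\cdot x \subseteq \liek\cdot x$. Given $\xi\in\liep$, I would decompose $\xi$ along the $\operatorname{ad}\beta$-eigenspace decomposition $\lieg = \bigoplus_\lambda \lieg(\lambda)$; since $\theta(\xi) = -\xi$ and $\theta(\lieg(\lambda)) = \lieg(-\lambda)$, one obtains $\xi = \xi_0 + \sum_{\lambda > 0}(\xi_\lambda - \theta(\xi_\lambda))$ with $\xi_0\in\liep^\beta$ and $\xi_\lambda\in\lieg(\lambda)\subseteq\lie r^{\beta_+}$. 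The hypotheses $\liep^\beta\cdot x = 0$ and $\lie r^{\beta_+}\cdot x = 0$ kill both $\xi_0\cdot x$ and the $\xi_\lambda\cdot x$ for $\lambda>0$, so $\xi\cdot x = -\eta\cdot x$ for $\eta := \sum_{\lambda > 0}(\xi_\lambda + \theta(\xi_\lambda)) \in \liek$. Hence $\lieg = \liek + \lieg_x$, so $K\cdot x$ and $G\cdot x$ have the same dimension; since $K$ is compact, $K\cdot x$ is both open and closed in the orbit $G\cdot x$. Writing $G = K\exp\liep$, the continuous path $t\mapsto \exp(t\xi)\cdot x$ for $\xi\in\liep$ starts at $x\in K\cdot x$ and must therefore remain in the clopen subset $K\cdot x$, which forces $G\cdot x = K\cdot x$.

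\textbf{Part (2).} The plan is to apply Proposition~\ref{proposition:slicetheoremforgamma} (the Slice Theorem for $\Gamma_\beta$) at a point $x\in X^\beta_{\max}$. Because $\mup^\beta$ attains its global maximum at $x$ and the self-adjoint linearization $d\beta_X(x)$ is the Hessian of this gradient function, one has $W^{\beta_+} = 0$ (as already invoked in the proof of Proposition~\ref{proposition:extreme}(\ref{item:extreme4})). The slice at $x$ is then identified with an open neighbourhood of $0$ in $W^{\beta_-}\oplus W^\beta$, and the linearized $\exp(t\beta)$-flow sends every such vector, as $t\to+\infty$, to a point of $W^\beta$, i.e., to a point of $X^\beta_{\max}$ close to $x$. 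Thus $X^{\beta_-}_{\max}$ contains an open neighbourhood of each $x\in X^\beta_{\max}$. For a general $y\in X^{\beta_-}_{\max}$ with $x=\lim_{t\to+\infty}\exp(t\beta)\cdot y\in X^\beta_{\max}$, I would pick $T$ sufficiently large so that $\exp(T\beta)\cdot y$ lies in the slice neighbourhood of $x$; by continuity the same holds for an open neighbourhood of $\exp(T\beta)\cdot y$, and applying the diffeomorphism $\exp(-T\beta)$ (which preserves the $\Gamma_\beta$-invariant set $X^{\beta_-}_{\max}$) produces an open neighbourhood of $y$ in $X^{\beta_-}_{\max}$.

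\textbf{Part (3).} In the connected complex case, the plan is to show the complement $Y:=X\setminus X^{\beta_-}_{\max}$ is nowhere dense. The holomorphic extension $\Gamma_\beta^c\simeq\C^*$ together with compactness of $X$ forces every $\Gamma_\beta^c$-orbit closure to be a rational curve or a point, so $\lim_{t\to+\infty}\exp(t\beta)\cdot y$ exists in $X^\beta$ for every $y\in X$, giving a decomposition $X = \bigsqcup_r X^{\beta_-}_r$ in which each stratum is a complex locally closed submanifold by Proposition~\ref{proposition:slicetheoremforgamma}(\ref{proposition:slicetheoremforgamma2}) of complex dimension $\dim_\C X - \dim_\C W^{\beta_+}$ along $X^\beta_r$. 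The main obstacle is showing $W^{\beta_+}\neq 0$ on $X^\beta_r$ for every $r<r_{\max}$, i.e., that the only local maximum of $\mup^\beta$ is the global one; a second local maximum basin would be an open subset of $X$ disjoint from the open set $X^{\beta_-}_{\max}$, which is precluded on the connected complex manifold $X$ by a Frankel-type argument for K\"ahler moment maps, or equivalently by the standard K\"ahler Bialynicki-Birula / Kirwan-Ness theory (see \cite{BGH2} or \cite{HSS}). Granting this, each $X^{\beta_-}_r$ with $r<r_{\max}$ has complex codimension at least one, so $Y$ is a finite union of proper complex-analytic strata and is nowhere dense in $X$.
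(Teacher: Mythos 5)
The paper states this corollary without any proof, so there is nothing to compare against line by line; I will judge your argument on its own terms. Parts (1) and (2) are correct and are surely what the authors intend. For (1), the decomposition $\xi=\xi_0+\sum_{\lambda>0}(\xi_\lambda-\theta(\xi_\lambda))$ with $\xi_0\in\liep^\beta$ and $\xi_\lambda\in\lie r^{\beta_+}$, combined with Proposition \ref{proposition:extreme} (\ref{item:extreme3}) and (\ref{item:extreme4}), does give $\liep\cdot x\subset\liek\cdot x$, hence $\lieg=\liek+\lieg_x$; your clopen-plus-path argument (equivalently: $KG_x$ is open and closed in $G$, and $\exp(\R\xi)$ is connected) correctly closes the loop without assuming $G$ connected. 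For (2), the only points you elide are that the $\Gamma_\beta$-invariant slice neighbourhood can be taken of product form $W^{\beta_-}\times U_0$ (so that the forward limit $(0,w_0)$ actually lies in the chart) and that the limit lands in $X^\beta_{\max}$ rather than merely in $X^\beta$ because $X^\beta_{\max}$ is open in $X^\beta$ by Proposition \ref{proposition:extreme}(\ref{item:extreme2}); both are routine and the second is implicit in your phrasing.

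In (3) you have correctly identified the crux, but you have not proved it: everything rests on the claim that $W^{\beta_+}\neq 0$ at every component of $X^\beta$ not contained in $X^\beta_{\max}$, i.e.\ that $\mup^\beta$ has no local maximum other than the global one, and you only gesture at ``a Frankel-type argument'' for this. That is a genuine omission as written, because it is the entire content of part (3): the Bialynicki--Birula stratification and the evenness of the (co)index are formal, whereas the uniqueness of the local maximum is exactly where connectedness and the K\"ahler hypothesis enter. The fact itself is true and standard (it is Atiyah's connectedness lemma for the circle-moment-map component $\mu^{i\beta}$, \cite{atiyah}, or Frankel's theorem: since every critical component has even index and even coindex, a second local-maximum component would disconnect $X$ via the Morse--Bott decomposition), so a precise citation would make your proof complete; but as it stands the step is asserted, not established. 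A secondary caveat: your justification that $\lim_{t\to+\infty}\exp(t\beta)\cdot y$ exists for all $y$ ``by compactness of $X$'' is not right as stated --- compactness alone does not force $\C^*$-orbit closures of a holomorphic action to be analytic (Hopf-type examples) --- what you actually need is that $\exp(t\beta)$ is the gradient flow of the Morse--Bott function $\mup^\beta$, which converges on a compact manifold; this is again the K\"ahler structure doing the work.
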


\begin{corollary}
 \label{corollary:extreme:complexcase}
 Assume that $G$ is connected and that we are in the complex case. Then we have  $X^\beta_{\max}\subset X^{G^\beta}$ for $\beta\in C_\sigma$ and $G^\beta=G^\sigma$ for
 $\beta\in C_\sigma^{K^\sigma}$.
\end{corollary}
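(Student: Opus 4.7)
The plan is to handle the two claims separately, each by a short argument built on what has already been established.

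For the inclusion $X^\beta_{\max}\subset X^{G^\beta}$ with $\beta\in C_\sigma$, I would start from Proposition~\ref{proposition:extreme}(\ref{item:extreme3}), which already gives $X^\beta_{\max}\subset X^{\liep^\beta}$; in other words, the real subalgebra $\liep^\beta\subset\lieg^\beta$ annihilates every point of $X^\beta_{\max}$. In the complex case $\liep=\im\liek$, so $\liep^\beta=\im\liek^\beta$. Since we are in the smooth complex case, the $G$-action on $X$ is holomorphic, so for every $\xi\in\lieg$ the fundamental vector fields satisfy $(\im\xi)_X=J\xi_X$, where $J$ is the complex structure on $X$. Applied to $\xi\in\liek^\beta$, this gives $\xi_X(x)=-J(\im\xi)_X(x)=0$ for every $x\in X^\beta_{\max}$, since $\im\xi\in\liep^\beta$. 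Thus the full Lie algebra $\lieg^\beta=\liek^\beta\oplus\liep^\beta$ acts trivially on $X^\beta_{\max}$. Since $G$ is connected complex reductive and $\beta\in\liep$, the centralizer $G^\beta$ is connected (it is the centralizer of an element of a Cartan subalgebra of $\lieg$), and a trivial Lie algebra action of a connected group is a trivial group action. This gives $X^\beta_{\max}\subset X^{G^\beta}$.

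For the equality $G^\beta=G^\sigma$ when $\beta\in C_\sigma^{K^\sigma}$, Proposition~\ref{proposition:extreme}(\ref{item:extreme5}) already yields $K^\beta=K^\sigma$. In the complex case both $\beta$ and $\sigma$ lie in $\liep=\im\liek$, so their centralizers in $G$ coincide with the complexifications of their centralizers in $K$; that is, $G^\beta=(K^\beta)^\C$ and $G^\sigma=(K^\sigma)^\C$. The equality of the $K$-stabilizers then promotes directly to $G^\beta=G^\sigma$.

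The only nontrivial ingredient is the connectedness of $G^\beta$, used in the first paragraph, which is a standard fact about centralizers of elements of a Cartan subalgebra in a connected complex reductive group and poses no real obstacle. Everything else is a direct packaging of the earlier propositions with the key complex-case identity $(\im\xi)_X=J\xi_X$, which is what upgrades the $\liep^\beta$-triviality to $\lieg^\beta$-triviality.
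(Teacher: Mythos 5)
Your proof is correct and follows essentially the same route as the paper: the paper's own (very terse) argument is precisely that $G^\beta$ is connected for connected complex reductive $G$ and that $\liep^\beta=\im\liek^\beta$ forces $X^{\liep^\beta}=X^{\lieg^\beta}=X^{G^\beta}$, which you have simply spelled out via the identity $(\im\xi)_X=J\xi_X$. Your complexification argument for $G^\beta=G^\sigma$ fills in a step the paper leaves implicit, and it is the intended one.
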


\proof
Since $G^\beta$ is connected for a connected complex reductive group $G$ and $\liep^\beta=\im\liek^\beta$
we have $X^{\liep^\beta}=X^{\lieg^\beta}=X^{G^\beta}$.
\qed

\begin{remark}
 It is shown in \cite{BGH2} that in the case of $\beta\in C_\sigma^{K^\sigma}$ the bundle  $p^{\beta_-}\colon X^{\beta_-}\to X^\beta_{\max}$ does not depend on the choice of $\beta\in C^{K^\sigma}_{\beta}$.
\end{remark}

%%%%%%%%%%%%%%%%%%%%%%%%%%%%%%%%%%%%%%%%%%%%%%%%%%%%%%%%%%%%%%%%%%%%%%%%
\section{Proof of the structure theorem}
\label{section:proof}
%%%%%%%%%%%%%%%%%%%%%%%%%%%%%%%%%%%%%%%%%%%%%%%%%%%%%%%%%%%%%%%%%%%
In this section we proof a local structure theorem at any point $x\in \mup\inv(\sigma)$, where $\sigma$ is an extreme point of the convex envelope $E$ of $\mup(X)$ and $X$ is a $G$-invariant compact submanifold  of $Z$.

We fix a $\beta\in C_\sigma^{K^\sigma}$ (see Proposition \ref{proposition:extreme}) and  let $I^\beta$ be the
ineffectivity of the $G^\beta$-action  on $X^\beta_{\max}$. Then  $I^\beta$ is a $\Theta$-compatible normal subgroup of $G^\beta$
(see Corollary \ref{corollary:noncompactfactors}). Its
Lie algebra contains the ideal $[\liep^\beta,\liep^\beta]+\liep^\beta$ and  $I^\beta$ is a $\Theta$ compatible normal subgroup of $G^\beta$ which contains the non-compact factor of $G^\beta$. The group $R^{\beta_-}\rtimes I^\beta$ acts fiberwise on the bundle $p^{\beta_-}\colon X^{\beta_-}_{\max}\to X^\beta_{\max}$.

\begin{remark}
 \label{remark:complexcase2}
 \begin{enumerate}
  \item If $G$ is connected and  we are in  the complex case, then $I^\beta=G^\beta$ (Corollary \ref{corollary:extreme:complexcase}) and $R^{\beta_-}\rtimes G^\beta=G^{\beta_-}$.
  \item If we have  $(p^{\beta_-})\inv(\sigma)=\{x\}$, then $I^\beta=G^\beta$.
 \end{enumerate}
\end{remark}

Since $I^\beta$ is $\Theta$-compatible and fixes  $X^\beta_{\max}$ pointwise we may apply the Slice Theorem at $x\in X^\beta_{\max}$. For the $I^\beta$-representation $T_xX$ we have an $I^\beta$-equivariant splitting $T_xX=W^{\beta_-}\oplus W^\beta$
where $W^\beta$ is the
$I^\beta$-submodule which is pointwise fixed and $W^{\beta-}$ is the $I^\beta$ submodule such that
the linearized vector field $d\beta_X(x)$ acts by strictly negative weights. Note that $W^{\beta_+}=\{0\}$, since $\mup^\beta$ is maximal in $x$. This implies
that the only $\Gamma_\beta:=\exp\R\beta$ stable  neighborhood of zero in $W^{\beta_-}$ is the entire space $W^{\beta_-}$.
The Slice Theorem is given by an open  neighborhood $U$ of $0$ in $W^\beta$, an $I^\beta$-invariant open neighborhood $\Omega$ of $x$ in X and an $I^\beta$-equivariant diffeomorphism
$ \Psi\colon W^{\beta_-}\times U \to \Omega$.   The diffeomorphism $\Psi$ preserves the fibers of the fibration
$\pi_U\colon W^{\beta_-}\times U\to U$ and $p^{\beta_-}\colon X^{\beta_-}_{\max}\to X^\beta_{\max}$. In particular
$\Omega=(p^{\beta_-})\inv(V)$ for some open neighborhood $V$ of $x$ in $X^\beta_{\max}$ and $\Psi$ maps $U$ diffeomorphically
onto $V$. From this we get a local  $I^\beta$-equivariant trivialization  $V\times W^{\beta_-}\to (p^{\beta_-})\inv(V)$ of the fibration.

\begin{lemma}
 \label{lemma:unipotentliniarization}
 The tangent space $\lie r^{\beta_-}\cdot x$ of the $R^{\beta_-}$-orbit through $x$ is an
 $I^\beta$-submodule of $W^{\beta_-}$. The differential $\varphi_x\colon \lie r^{\beta_-}\to
 \lie r^{\beta_-}\cdot x$ of the orbit map $\Phi_x\colon R^{\beta_-}\to X$, $g\mapsto g\cdot x$, is an $I^\beta$-equivariant isomorphism.
\end{lemma}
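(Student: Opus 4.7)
The plan is to check the three distinct claims in sequence, each of which reduces to a fact already established in the excerpt.

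First, I would verify that $\lie r^{\beta_-}\cdot x$ lands in $W^{\beta_-}$. Since $x$ is fixed by $\Gamma_\beta=\exp\R\beta$, for any $\xi\in\lieg$ the curve $t\mapsto \exp(t\beta)\cdot \xi_X(x)$ equals $(\Ad(\exp t\beta)\xi)_X(x)$, so the linearized vector field $d\beta_X(x)$ acts on $\xi_X(x)$ by the weight of $\ad\beta$ on $\xi$. For $\xi\in\lie r^{\beta_-}$ all such weights of $\ad\beta$ are strictly negative by definition of the unipotent radical, so $\xi_X(x)\in W^{\beta_-}$ by the weight description of $W^{\beta_-}$ in Proposition \ref{proposition:slicetheoremforgamma}.

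Next, for $I^\beta$-stability, I would use that $I^\beta\subset G^\beta$ is the Levi factor of $G^{\beta_-}$ and hence normalizes the unipotent radical $R^{\beta_-}$; in particular $\Ad(g)$ preserves $\lie r^{\beta_-}$ for every $g\in I^\beta$. Since $I^\beta$ fixes $x$ (as $I^\beta$ is the ineffectivity of the $G^\beta$-action on $X^\beta_{\max}$ and $x\in X^\beta_{\max}$), for $\xi\in\lie r^{\beta_-}$ we get
\[
g_{*}\bigl(\xi_X(x)\bigr)=\bigl(\Ad(g)\xi\bigr)_X(g\cdot x)=\bigl(\Ad(g)\xi\bigr)_X(x)=\varphi_x\bigl(\Ad(g)\xi\bigr).
\]
This simultaneously shows that $\lie r^{\beta_-}\cdot x$ is $I^\beta$-invariant and that $\varphi_x$ is $I^\beta$-equivariant (with $I^\beta$ acting on $\lie r^{\beta_-}$ via $\Ad$).

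Finally, $\varphi_x$ is surjective by construction onto its image $\lie r^{\beta_-}\cdot x$. Injectivity is exactly the content of Proposition \ref{proposition:extreme}(\ref{item:extreme6}): for $\beta\in C_\sigma^{K^\sigma}$ and $x\in X^\beta_{\max}$, $\xi_X(x)\neq 0$ for every nonzero $\xi\in\lie r^{\beta_-}$, so $\ker\varphi_x=0$. No step poses a real obstacle here; the only subtlety is that $I^\beta$-stability genuinely relies on $I^\beta$ sitting inside the Levi $G^\beta$ (so that $\Ad(I^\beta)$ preserves $\lie r^{\beta_-}$) together with the fact that $I^\beta$ fixes the base point $x$, and both are built into the definition of $I^\beta$ given just before the lemma.
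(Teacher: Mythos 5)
Your proof is correct and follows essentially the same route as the paper: $I^\beta$-equivariance comes from $hgh^{-1}\cdot x=h\cdot(g\cdot x)$ (using that $I^\beta$ fixes $x$ and normalizes $R^{\beta_-}$), surjectivity is by definition, and injectivity is exactly Proposition \ref{proposition:extreme}~(\ref{item:extreme6}). The only cosmetic difference is the containment $\lie r^{\beta_-}\cdot x\subset W^{\beta_-}$: you derive it from the weights of $\operatorname{ad}\beta$ on $\lie r^{\beta_-}$ being strictly negative, whereas the paper observes that $W^{\beta_-}$ is the tangent space at $x$ of the fiber $(p^{\beta_-})^{-1}(x)$, which contains the orbit $R^{\beta_-}\cdot x$; both arguments are immediate.
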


\proof
For $h\in I^\beta$ and $g\in R^{\beta_-}$ we have $hgh\inv\cdot x=h\cdot (g\cdot x)$.
This shows that $\varphi_x$ is an $I^\beta$-equivariant surjective map.
Proposition \ref{proposition:extreme} (\ref{item:extreme6}) implies injectivity of $\varphi_x$.
Since $W^{\beta_-}$ is the tangent space of the fiber of $(p^{\beta_-})\inv(x)$ which contains $R^{\beta_-}\cdot x$ we also have $\lie r^{\beta_-}\cdot x\subset W^{\beta_-}$.
\qed

\begin{remark}
 Since $I^\beta$ is $\Theta$-compatible, the representation $T_xX$ is completely reducible.
\end{remark}

\begin{theorem}
\label{theorem:structuretheorem} Let $X$ be a compact $G$-invariant submanifold of $Z$, $\sigma$ an extreme
point of the convex envelope of $\mup(X)$ in $\liep$, $\beta\in C_\sigma^{K^\sigma}$ and $x\in (p^{\beta_-})\inv(\sigma)$.

Let $W^{\beta_-}=\lie r^{\beta_-}\cdot x\oplus F$ be an $I^\beta$-equivariant splitting and $\Psi\colon W^{\beta_-}\times U\to \Omega$ the diffeomorphism
obtained by the Slice Theorem. Then we can choose the open neighbourhood $U$ of $0\in W^\beta$ such that the map
\[
 \Phi\colon R^{\beta_-}\times F\times U\to (p^{\beta_-})\inv (V),\ (g,v,u)\mapsto g\cdot \Psi(v,u)
\]
 is a diffeomorphism.

 The map is $R^{\beta_-}\rtimes I^\beta$-equivariant where the action on
 $R^{\beta_-}\times F\times U$ is given by $(r,h),(g,v,u)\mapsto (rgh\inv,h\cdot v, u)$.
\end{theorem}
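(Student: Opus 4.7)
The theorem's hypotheses already provide the Slice Theorem diffeomorphism $\Psi\colon W^{\beta_-}\times U\to \Omega$ for the $I^\beta$-action at $x$ (from Theorem~\ref{theorem:slice} together with the $\Gamma_\beta$-contraction on $W^{\beta_-}$ that lets the slice fill out the whole negative-weight space) and the $I^\beta$-equivariant splitting $W^{\beta_-}=\lie r^{\beta_-}\cdot x\oplus F$. The content of the theorem is therefore that, after possibly shrinking $U$, the map $\Phi(g,v,u):=g\cdot\Psi(v,u)$ is an equivariant diffeomorphism onto $(p^{\beta_-})\inv(V)$ with $V:=\Psi(\{0\}\times U)\subset X^\beta_{\max}$.

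First I would verify the target and the equivariance of $\Phi$. Because $\Gamma_\beta\subset I^\beta$ and $\Psi$ is $I^\beta$-equivariant, $\exp(t\beta)\cdot\Psi(v,u)=\Psi(\exp(t\beta)v,u)\to\Psi(0,u)$ as $t\to+\infty$, so $\Psi(v,u)\in X_{\max}^{\beta_-}$ with $p^{\beta_-}(\Psi(v,u))=\Psi(0,u)$; combined with the $R^{\beta_-}$-invariance of $p^{\beta_-}$ this places $\Phi(g,v,u)$ in $(p^{\beta_-})\inv(V)$. Equivariance is immediate from left translation on the first factor and the $I^\beta$-equivariance of $\Psi$. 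At the base point the differential splits as $d\Phi_{(e,0,0)}(\xi,w,s)=\xi_X(x)+w+s\in \lie r^{\beta_-}\cdot x + F + W^\beta = T_xX$, which is a linear isomorphism by Lemma~\ref{lemma:unipotentliniarization} combined with the chosen splitting of $W^{\beta_-}$.

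The hard part will be boosting this pointwise infinitesimal statement to a global diffeomorphism on the non-compact domain $R^{\beta_-}\times F\times U$, and the tool I would use is a $\Gamma_\beta$-equivariance of $\Phi$ itself. Writing $g_t:=\exp(t\beta)g\exp(-t\beta)$ and $\sigma_t(g,v,u):=(g_t,\exp(t\beta)\cdot v,u)$, a direct computation (using $\Gamma_\beta\subset I^\beta$ so that $\Psi$ intertwines $\exp(t\beta)$) gives $\Phi\circ\sigma_t=\exp(t\beta)\cdot\Phi$. As $t\to+\infty$, $g_t\to e$ (since $g\in R^{\beta_-}$) and $\exp(t\beta)v\to 0$ (since $F\subset W^{\beta_-}$ has strictly negative $\beta$-weights), so $\sigma_t$ contracts the entire domain onto the central slice $\{e\}\times\{0\}\times U$. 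After further shrinking $U$, continuity from the base-point computation makes $d\Phi$ a linear isomorphism on a neighbourhood of $\{e\}\times\{0\}\times U$; the $\sigma_t$- and $R^{\beta_-}$-equivariances then propagate this to all of $R^{\beta_-}\times F\times U$, giving the local diffeomorphism property. For injectivity I would argue that if $\Phi(g_1,v_1,u_1)=\Phi(g_2,v_2,u_2)$ then applying $p^{\beta_-}$ forces $u_1=u_2$, and driving both points by $\sigma_t$ into the neighbourhood where $\Phi$ is already a diffeomorphism forces them to coincide there, hence globally by invertibility of $\sigma_t$. For surjectivity, any $y\in(p^{\beta_-})\inv(V)$ satisfies $\exp(t\beta)\cdot y\to p^{\beta_-}(y)\in V$, which drives $y$ into $\Omega$ for large $t$ hence into the image of $\Phi$; the $\Gamma_\beta$-equivariance then pulls this back to $y$ itself.
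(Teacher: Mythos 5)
Your proposal is correct and follows essentially the same route as the paper: the differential of $\Phi$ at $(e,0,0)$ is an isomorphism via Lemma~\ref{lemma:unipotentliniarization}, and the $\gamma(t)=\exp(t\beta)$-equivariance of $\Phi$ together with the contraction of both $R^{\beta_-}\times F\times U$ and $(p^{\beta_-})\inv(V)$ onto the central slice propagates the local statement to a global diffeomorphism. You merely spell out the injectivity and surjectivity steps that the paper's proof leaves implicit in the sentence about finding a single $t$ driving any two points into the product neighbourhood where $\Phi$ is already a diffeomorphism.
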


\proof
It follows from the definition that the map $\Psi$ is equivariant.

Let $\gamma\colon\R\to I^\beta$ be the one-parameter group $\gamma(t)=\exp t\beta$. For every
$(g,v,u)\in R^\beta\times F\times U$ we have
\[
 \lim_{t\to +\infty}\gamma(t)\cdot (g,v,u)
=   \lim_{t\to +\infty} (\gamma(t)g\gamma(t)\inv,\gamma(t)\cdot v, u)=(e,0,u).
\]
and
\[
  \lim_{t\to +\infty}\gamma(t)\cdot \Phi(g,v,u)
=   \lim_{t\to +\infty} (\gamma(t)g\gamma(t)\inv\cdot \Psi(\gamma(t)\cdot v,u))=\Psi(0,u).
\]

The differential  of $\Phi$ at the point $(e,0,0)$ is an isomorphism and since $\Phi$ is $\gamma(\R)$-equivariant it
 is a local diffeomorphism in a $\gamma(\R)$-invariant neighbourhood of $(e,0,0)$. Every open $\gamma(\R)$-invariant neighborhood
of $(e,0,0)$ contains an open product neighbourhood $\Omega:=N\times U\times \tilde U$ of $(e,0,0)$ such that
$\Phi\vert\Omega$ is a diffeomorphism. Since for $x,y\in \Gamma_\beta\cdot(N\times U\times \tilde U)= R^{\beta_-}\times U\times F$
there is a $t\in \R$ such that $\gamma(t)\cdot x$, $\gamma(t)\cdot y\in \Omega$ the map $\Phi$ is a diffeomorphism
on $R^{\beta_-}\times U\times F$ with image $(p^{\beta_-})\inv(V)$ where $V=\Phi(\{e\}\times U\times \{0\})=\Psi(U\times \{0\} )$.
\qed

\begin{remark}
 \label{remark:theorem:structuretheorem:complexcase}
\begin{enumerate}
 \item In the complex case and for a connected $G$ we have $I^\beta=G^\beta$. The map $\Phi$ is biholomorphic and the inclusion
 $R^{\beta_-}\times F\subset G^{\beta_-}\times F$ induces a $G^{\beta_-}$-equivariant biholomorphic map
 \[R^{\beta_-}\times F\times U\to  \twist{G^{\beta_-}}{G^\beta}{F}\times U.\]
 \item In the real case the inclusion $R^{\beta_-}\times F\times U\subset (I^{\beta_-}\times F)\times U$ induces an $I^{\beta_-}$-equivariant diffeomorphism
 \[R^{\beta_-}\times F\times U\to  (\twist{(R^{\beta_-}\rtimes I^\beta)}{I^\beta}{F})\times U.\]
    \item If we replace $X^\beta_{\max}$ by some union $C$ of its connected components, then in a  similar way as
    above we can define $I^\beta$ for this union $C$. Then Theorem \ref{theorem:structuretheorem} holds for this modified $I^\beta$ and any $x\in C$.
  \item In the complex connected case $X^\beta_{\max}$ is connected.
\end{enumerate}
\end{remark}

\begin{corollary}
 \label{corollary:isolated}
 Let a connected component of $X^\beta_{\max}$ be an isolated point $x$ and let $I^\beta=G^\beta_x$. Then
 $I^\beta$ contains the connected component of the  identity of $G^\beta$, $(p^{\beta_-})\inv (x)$ is open in $X$
 and $I^\beta$-equivariantly diffeomorphic to  $\lie r^{\beta_-}\times W^{\beta_-}$.
 If $X^\beta_{\max}=\{x\}$, then we have $I^\beta=G^\beta$
\end{corollary}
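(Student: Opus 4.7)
The plan is to specialize Theorem \ref{theorem:structuretheorem} to the degenerate case where the slice base $W^\beta$ collapses, so that the entire neighborhood of $x$ in $X$ along the $p^{\beta_-}$-fiber is recovered from the fiber data alone.

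First I would verify $(G^\beta)^0 \subset I^\beta$. By Proposition \ref{proposition:extreme}(\ref{item:extreme2}), $X^\beta_{\max}$ is $G^\beta$-stable. The identity component $(G^\beta)^0$ is connected, so its orbit through $x$ lies inside the connected component of $x$ in $X^\beta_{\max}$, which by hypothesis is just $\{x\}$. Hence $(G^\beta)^0$ fixes $x$, i.e.\ $(G^\beta)^0 \subset G^\beta_x = I^\beta$.

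Second, with this redefined $I^\beta$ (valid by Remark \ref{remark:theorem:structuretheorem:complexcase}(3)), I apply Theorem \ref{theorem:structuretheorem} at $x$. Because $\{x\}$ is open in $X^\beta_{\max}$, the open neighborhood $V \subset X^\beta_{\max}$ of $x$ appearing in the theorem may be taken to be $\{x\}$, which via the Slice Theorem diffeomorphism $\Psi\colon U \to V$ forces $U = \{0\}$ and hence $W^\beta = T_x X^\beta_{\max} = \{0\}$. Combined with $W^{\beta_+} = \{0\}$ (noted in Section \ref{section:proof}, since $\mup^\beta$ is maximal at $x$), this gives $T_x X = W^{\beta_-}$. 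Theorem \ref{theorem:structuretheorem} then produces an $R^{\beta_-} \rtimes I^\beta$-equivariant diffeomorphism
\[
\Phi\colon R^{\beta_-} \times F \to (p^{\beta_-})\inv(x),
\]
where $W^{\beta_-} = \lie r^{\beta_-}\cdot x \oplus F$ as $I^\beta$-modules. Identifying $R^{\beta_-}$ with $\lie r^{\beta_-}$ via the $I^\beta$-equivariant exponential, and using Lemma \ref{lemma:unipotentliniarization} to identify $\lie r^{\beta_-}$ with $\lie r^{\beta_-}\cdot x$ as $I^\beta$-modules, one obtains the asserted $I^\beta$-equivariant diffeomorphism of $(p^{\beta_-})\inv(x)$ with the slice data $\lie r^{\beta_-} \times F \cong W^{\beta_-}$.

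Third, openness follows from a dimension count:
\[
\dim (p^{\beta_-})\inv(x) = \dim \lie r^{\beta_-} + \dim F = \dim W^{\beta_-} = \dim T_x X = \dim X,
\]
so $(p^{\beta_-})\inv(x)$ is a smooth submanifold of full dimension and hence open in $X$. Finally, if $X^\beta_{\max} = \{x\}$, then Proposition \ref{proposition:extreme}(\ref{item:extreme2}) forces $x$ to be $G^\beta$-fixed, so $I^\beta = G^\beta_x = G^\beta$. There is no substantive obstacle here; the only care needed is to match the Slice Theorem conventions in the degenerate case $W^\beta = \{0\}$, so that the fiber of $p^{\beta_-}$ over $x$ is genuinely open in $X$ rather than a proper submanifold.
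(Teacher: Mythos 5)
Your argument is correct and is exactly the intended deduction: the paper states this corollary without proof, and your route --- invoking Remark \ref{remark:theorem:structuretheorem:complexcase}(3) to set $I^\beta=G^\beta_x$, observing that $\{x\}$ open in the $G^\beta$-stable set $X^\beta_{\max}$ forces $(G^\beta)^0\subset G^\beta_x$ and $W^\beta=\{0\}$, hence $U=\{0\}$, $V=\{x\}$, and then reading off Theorem \ref{theorem:structuretheorem} --- is precisely the specialization the authors have in mind. The only point worth flagging is that what you prove (and what is dimensionally possible) is $(p^{\beta_-})\inv(x)\cong R^{\beta_-}\times F\cong \lie r^{\beta_-}\times F\cong W^{\beta_-}$, so the corollary's ``$\lie r^{\beta_-}\times W^{\beta_-}$'' should be read as $\lie r^{\beta_-}\times F$ (equivalently as $W^{\beta_-}=\lie r^{\beta_-}\cdot x\oplus F$), since the $\lie r^{\beta_-}$-direction is already contained in $W^{\beta_-}$; your reading is the correct one.
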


%%%%%%%%%%%%%%%%%%%%%%%%%%%%%%%%%%%%%%%%%%%%%%%%%%%%%%%%%%%%%%%%%%%%%%%%%%%%%%%%%%%%%%%
\section{Quotients} \label{section:quotients}
%%%%%%%%%%%%%%%%%%%%%%%%%%%%%%%%%%%%%%%%%%%%%%%%%%%%%%%%%%%%%%%%%%%%%%%%%%%%%%%%%%%%%%%%%%%

In this section $X$ is a $G$-invariant compact smooth submanifold of $X$, $\sigma$
is an extreme point of the convex envelope of $\mup(X)$ and $\beta\in C_\sigma^{K^\sigma}$.

As a consequence of the structure theorem we have that $q\colon X^{\beta_-}_{\max}\to X^{\beta_-}_{\max}/ R^{\beta_-}$
is an $R^{\beta_-}$-principal bundle.

\begin{corollary}
 \label{corollary:properandfree}
 The action of $R^{\beta_-}$ on $X^{\beta_-}$ is proper and free.
\end{corollary}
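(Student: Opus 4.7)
The plan is to read off both properties from the local form of the $R^{\beta_-}$-action given by Theorem~\ref{theorem:structuretheorem}. For each $x \in X^\beta_{\max}$ that theorem produces an open neighbourhood $V \subseteq X^\beta_{\max}$ of $x$ and an $(R^{\beta_-}\rtimes I^\beta)$-equivariant diffeomorphism
\[
\Phi\colon R^{\beta_-} \times F \times U \to (p^{\beta_-})^{-1}(V), \qquad p^{\beta_-}\circ\Phi = p_U,
\]
on which $R^{\beta_-}$ acts by $r\cdot(g,v,u) = (rg,v,u)$. Since $p^{\beta_-}$ is surjective onto $X^\beta_{\max}$, the preimages $(p^{\beta_-})^{-1}(V)$ obtained as $x$ varies form an open cover of $X^{\beta_-}_{\max}$.

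Freeness is then immediate. Left multiplication of $R^{\beta_-}$ on itself is free, so under $\Phi$ the isotropy of every point of $(p^{\beta_-})^{-1}(V)$ is trivial, and every point of $X^{\beta_-}_{\max}$ lies in some such chart.

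For properness I would apply the sequential criterion. Suppose $x_n \to x$ in $X^{\beta_-}_{\max}$ and $g_n\in R^{\beta_-}$ are such that $g_n\cdot x_n \to y$; it is enough to extract a convergent subsequence from $\{g_n\}$. Since $R^{\beta_-}$ preserves the fibres of $p^{\beta_-}$, we have $p^{\beta_-}(y) = p^{\beta_-}(x)$. Pick a single trivialisation $\Phi$ around $p^{\beta_-}(x)$; for large $n$ both $x_n$ and $g_n\cdot x_n$ lie in $(p^{\beta_-})^{-1}(V)$. Writing $x_n = \Phi(a_n,v_n,u_n)$, the explicit action formula forces $g_n\cdot x_n = \Phi(g_n a_n, v_n, u_n)$, and convergence of both $x_n$ and $g_n\cdot x_n$ in the chart yields convergence of $a_n$ and of $g_n a_n$. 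Continuity of multiplication and inversion in the Lie group $R^{\beta_-}$ then gives $g_n = (g_n a_n)a_n\inv$ convergent in $R^{\beta_-}$.

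I do not expect any genuine obstacle: the only nontrivial input is already contained in Theorem~\ref{theorem:structuretheorem}, and what remains is essentially the observation that a free action whose orbits fit into local trivialisations of the above type is automatically proper. The only small point to keep in mind is that $R^{\beta_-}$ acts along the fibres of $p^{\beta_-}$, which is precisely what allows one to localise to a single trivialising chart and reduce everything to elementary continuity in $R^{\beta_-}$.
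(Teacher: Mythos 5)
Your proof is correct and follows essentially the same route as the paper: freeness and properness are both read off from the local trivialization of Theorem~\ref{theorem:structuretheorem}, using the $R^{\beta_-}$-invariance of $p^{\beta_-}$ to localize a convergent sequence into a single chart and then concluding by continuity of the group operations. You merely spell out the final step (writing $g_n=(g_na_n)a_n\inv$) that the paper leaves implicit.
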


\proof Theorem \ref{theorem:structuretheorem} shows that the $R^{\beta_-}$ action is free. Let $(g_l,y_l)$ be a sequence in
$R^{\beta_-}\times X^{\beta_-}_{\max}$ such that $g_l\cdot y_l$ and $y_l$ converge in $X^{\beta_-}_{\max}$. Since $p^{\beta_-}$ is
$R^{\beta_-}$-invariant both sequences converges in $X^{\beta}_{\max}$. Theorem \ref{theorem:structuretheorem} now shows
that $g_l$ has a convergent subsequence.
\qed

The $G^\beta$-action on $X^{\beta_-}_{\max}$ induces an $G^\beta$-action on the quotient $X^{\beta_-}_{\max}/ R^{\beta_-}$.
 On $X^{\beta}_{\max}$ we have an
artificial $R^{\beta_-}\rtimes G^\beta$-action given by the projection $R^{\beta_-}\rtimes G^\beta\to G^\beta$.  Since $p^{\beta_-}\colon X^{\beta_-}_{\max} \to X^\beta_{\max}$ is $R^{\beta_-}\rtimes I^\beta$-invariant we  obtain
an $I^\beta$-invariant map $ p\colon X^{\beta_-}_{\max}/ R^{\beta_-}\to X^{\beta}_{\max}$ such that
\[
p\circ q=p^{\beta_-} .
\]
Theorem \ref{theorem:structuretheorem} now implies

\begin{theorem}
 $p\colon X^{\beta_-}_{\max}/ R^{\beta_-}\to X^{\beta}_{\max}$ is a topological Hilbert quotient.
\end{theorem}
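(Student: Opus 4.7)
The strategy is to use Theorem \ref{theorem:structuretheorem} to reduce to a local model, then exploit the strictly negative weights of $\beta$ on the slice $F$.

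First I would confirm that $p$ is continuous and $I^\beta$-invariant. The identity $p \circ q = p^{\beta_-}$ together with surjectivity of $q$ forces invariance, since $I^\beta$ fixes $X^\beta_{\max}$ pointwise; continuity follows from $q$ being the projection of an $R^{\beta_-}$-principal bundle (Corollary \ref{corollary:properandfree}) together with continuity of $p^{\beta_-}$.

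The main step is the local computation. Fix $x \in X^\beta_{\max}$ and let $U$, $F$, and $\Phi\colon R^{\beta_-} \times F \times U \to (p^{\beta_-})^{-1}(U)$ be as in Theorem \ref{theorem:structuretheorem}. Since the $R^{\beta_-}$-action is free and proper and $\Phi$ is $R^{\beta_-} \rtimes I^\beta$-equivariant, quotienting both sides by $R^{\beta_-}$ yields an $I^\beta$-equivariant homeomorphism $\overline{\Phi}\colon F \times U \to p^{-1}(U)$ intertwining the projection $F \times U \to U$ with $p\vert_{p^{-1}(U)}$. Because $I^\beta$ acts trivially on $U \subseteq X^\beta_{\max}$, the local picture reduces to $I^\beta$ acting on the factor $F$ alone, with the base $U$ identified with the trivial factor.

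Now the weight condition takes over: since all weights of $\beta$ on $F$ are strictly negative, $\exp(t\beta)\cdot v \to 0$ in $F$ as $t \to +\infty$ for every $v \in F$, so $0 \in \overline{\exp(\R\beta)\cdot v} \subseteq \overline{I^\beta \cdot v}$ (using $\exp(\R\beta) \subseteq I^\beta$). Hence $\{0\}$ is the unique closed $I^\beta$-orbit in $F$, and the projection $F \to \{0\}$ is the topological Hilbert quotient for the $I^\beta$-action; multiplying by $U$, the projection $F \times U \to U$ is the topological Hilbert quotient for $I^\beta$, which through $\overline{\Phi}$ identifies with $p\vert_{p^{-1}(U)}$. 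The step I expect to be most delicate is then formal: checking that the topological Hilbert quotient property is local on the base, so that the local identifications glue into the global statement. This reduces to the fact that the property is characterized by uniqueness of closed orbits in fibers together with the quotient topology, both of which are local conditions; this is standard in the framework of \cite{HS}.
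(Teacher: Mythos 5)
Your proposal is correct and follows exactly the route the paper intends: the paper's entire proof is the phrase ``Theorem \ref{theorem:structuretheorem} now implies,'' and your elaboration --- passing to the $R^{\beta_-}$-quotient of the local model to get $F\times U\to U$, using the strictly negative weights of $\beta$ on $F$ to see that every $I^\beta$-orbit closure in a fiber contains the origin so that each fiber is a single closure-equivalence class with $\{0\}$ as its unique closed orbit, and then gluing since the quotient-topology and closed-orbit conditions are local over the base --- is precisely the argument being invoked. No gaps; this simply supplies the details the paper omits.
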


%%%%%%%%%%%%%%%%%%%%%%%%%%%%%%%%%%%%%%%%%%%%%%%%%%%%%%%%%%%%%%%%%%%%
\section{Compact orbits}
\label{section:compactorbits}
%%%%%%%%%%%%%%%%%%%%%%%%%%%%%%%%%%%%%%%%%%%%%%%%%%%%%%%%%%%%%%%%%%%%%%

Let $X$ be a compact $G$-invariant subset of $Z$ and $\eta\colon Z\to \R$, $\eta(x)=\Vert \mup(x)\Vert^2$. We have $\grad\eta(x)=\beta_X(x)$ where $\beta:=\mup(x)$. Note that the flow of $\eta$ is along $G$-orbits.

\begin{proposition}
\label{proposition:extremanorm}
 Let $x\in X$ be such $\eta(x)$ is maximal and $\beta:=\mup(x)$. Then $\beta$ is an extreme point of
 the convex envelope of $\mup(X)$, and $x\in X_{\max}^\beta$.
\end{proposition}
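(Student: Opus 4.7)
The plan is to exploit a Cauchy–Schwarz inequality: maximality of the Euclidean norm of $\mup$ at $x$ forces $\mup(X)$ to sit in a closed ball that is tangent to a supporting hyperplane at $\beta = \mup(x)$, and the tangency point is unique.

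First, I would set $\beta := \mup(x)$ and translate the hypothesis into the geometric statement that $\mup(X)$ is contained in the closed ball of radius $\|\beta\|$ centered at the origin of $\liep$. For any $y \in X$, Cauchy–Schwarz then gives
\[
\langle \mup(y), \beta\rangle \;\le\; \|\mup(y)\|\cdot\|\beta\| \;\le\; \|\beta\|^2,
\]
with equality throughout only if $\mup(y) = \beta$ (equality in Cauchy–Schwarz forces $\mup(y)$ to be a non-negative multiple of $\beta$, and the norm condition pins the multiple to $1$).

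Next, I would interpret this in terms of the exposed face of the convex envelope $E$ of $\mup(X)$ cut out by $\beta$. Since $\alpha \mapsto \langle \alpha,\beta\rangle$ is linear, its maximum over $E$ equals its maximum over $\mup(X)$, which by the preceding inequality is $\|\beta\|^2$. If $\alpha = \sum t_i \alpha_i \in E$ (with $\alpha_i \in \mup(X)$, $t_i \geq 0$, $\sum t_i = 1$) achieves this maximum, then averaging the pointwise bound forces $\langle \alpha_i, \beta\rangle = \|\beta\|^2$ for every $i$ with $t_i>0$, hence $\alpha_i = \beta$ and so $\alpha = \beta$. Therefore
\[
F_\beta(E) \;=\; \{\beta\},
\]
which exhibits $\beta$ as an extreme point of $E$, exposed by itself.

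Finally, since $\mup^\beta(x) = \langle \mup(x),\beta\rangle = \|\beta\|^2 = \max \mup^\beta(X)$, the point $x$ lies in $X^\beta_{\max}$ by the definition in Proposition \ref{proposition:extreme}. The argument is purely convex-geometric and does not actually require the gradient computation $\grad \eta = \beta_X$; no step here presents a serious obstacle, the only thing to be careful about is the equality case of Cauchy–Schwarz, which is precisely what makes the face a single point rather than a larger exposed face.
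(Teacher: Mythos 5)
Your proof is correct and is essentially the paper's own argument: the paper likewise bounds $\mup^\beta(y)=\langle\mup(y),\beta\rangle\le\eta(x)$ and then observes that the affine hyperplane through $\beta$ with normal vector $\beta$ meets the ball of radius $\Vert\beta\Vert$ only at $\beta$, which is exactly your equality case of Cauchy--Schwarz. The only difference is cosmetic: the paper additionally invokes $\grad\eta(x)=\beta_X(x)$ to record $x\in X^\beta$ directly, a step you correctly identify as inessential since $X^\beta_{\max}$ is defined by maximality of $\mup^\beta$ (and the inclusion $X^\beta_{\max}\subset X^\beta$ follows anyway from Lemma \ref{lemma:monotony}).
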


\begin{proof}
 Let $E$ be the convex envelope of $\mup(X)$. For the point $x$ we have $\grad\eta(x)=\beta_X(x)$. Since $x$ is maximal for $\eta\vert G\cdot x$ we
 obtain $x\in X^\beta$. We also have $\mup^\beta(x)=<\mup(x),\beta>=\eta(x)$. Since $\eta$ is maximal in $x$
 we have $\eta(y)\le \eta(x)$  for every $y\in X$. This implies
 $\mup^\beta(y)=<\mup (y),\beta>\le \eta(x)=\mu^\beta(x)$ and shows $x\in X^\beta_{\max}$. Since the affine plane $H_{\Vert \beta \Vert^2}(\beta)$ through $\beta$ with normal vector $\beta$  intersects the sphere
 of radius $\Vert \beta \Vert$ around zero only in the point $\beta$ and $\mup(X)\subset H_{\mup^\beta(x)}(\beta)$
 it follows that $\beta$ is an extreme point of $E$.
\end{proof}

\begin{lemma}
\label{lemma:orbitclosure}
\begin{enumerate}
 \item Every $G$-orbit in $X$ has a compact orbit in its closure.
 \item Every compact $G$-orbit in $Z$ is a $K$-orbit.
\end{enumerate}
\end{lemma}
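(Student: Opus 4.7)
The plan is to derive both statements from Proposition \ref{proposition:extremanorm} combined with Corollary \ref{corollary:proposition:extreme}(1), exploiting that the continuous function $\eta=\Vert\mup\Vert^2$ attains its maximum on any compact $G$-invariant subset of $Z$.

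For (1), I would apply this idea to the closure $Y:=\overline{G\cdot x}\subset X$, which is itself a compact $G$-invariant subset of $Z$. Let $y_0\in Y$ be a point where the restriction $\eta\vert Y$ attains its maximum. Proposition \ref{proposition:extremanorm} (applied with $Y$ in place of $X$) then gives that $\beta:=\mup(y_0)$ is an extreme point of the convex envelope of $\mup(Y)$ and that $y_0\in Y^\beta_{\max}$. Moreover, the very computation in the proof of that proposition shows that $\mup(Y)\subset\{\alpha\in\liep:\langle\alpha,\beta\rangle\le\Vert\beta\Vert^2\}$ with equality achieved at $\alpha=\beta$, so $\beta$ exposes itself and hence $\beta\in C_\beta$. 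Corollary \ref{corollary:proposition:extreme}(1), applied to $Y$ with $\sigma=\beta$, then yields that $G\cdot y_0$ is a $K$-orbit, hence compact, and it lies in $\overline{G\cdot x}$ by construction.

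Part (2) is the specialization of the same argument to an already-closed orbit. Given a compact $G$-orbit $W=G\cdot x\subset Z$, I take $Y=W$ (so $\overline{G\cdot x}=W$ trivially). The recipe above produces a $y_0\in W$ such that $G\cdot y_0$ is a $K$-orbit, and since $W$ is a single $G$-orbit we must have $G\cdot y_0=W$. Thus $W$ itself is a $K$-orbit.

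I do not expect a substantive obstacle. The only small points worth verifying are (a) that $\beta\in C_\beta$, so that Corollary \ref{corollary:proposition:extreme}(1) is applicable with $\sigma=\beta$, which is precisely the supporting-halfspace remark already contained in the proof of Proposition \ref{proposition:extremanorm}; and (b) that Proposition \ref{proposition:extremanorm} is valid for $Y=\overline{G\cdot x}$, which is immediate since the hypotheses of that proposition require only a compact $G$-invariant subset of $Z$, not a submanifold, and the gradient identity $\grad\eta=\beta_X$ used in its proof is evaluated along $G$-orbits, which are automatically smooth.
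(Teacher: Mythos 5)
Your proof is correct and follows essentially the same route as the paper: pass to the compact set $Y=\overline{G\cdot x}$, produce an extreme point of the convex envelope of $\mup(Y)$, and invoke Corollary \ref{corollary:proposition:extreme}(1). Your use of Proposition \ref{proposition:extremanorm} to exhibit the extreme point as $\mup(y_0)$ for a maximizer $y_0$ of $\eta$ also supplies the non-emptiness of $Y^\beta_{\max}$, a point the paper's one-line proof leaves implicit.
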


\begin{proof}
 For $z\in X$ let $Y:=\overline{G\cdot z}$ be the closure of the $G$-orbit through $z$. Since
 $Y$ is compact there is an extreme point $\sigma$ of the convex envelope of $\mup(Y)$. Then
 the $G$-orbit through $y\in Y^\beta_{\max}$ is a $K$-orbit (Corollary \ref{corollary:proposition:extreme}).
\end{proof}

\begin{remark}
 Even in the smooth compact complex case not every compact $G$-orbit in $X$ can be realized as an orbit
 through a point in $X^\beta_{\max}$ for some $\beta$.
 \end{remark}

The case where the compact set $X$ contains exactly one compact $G$-orbit is the simplest possible. We collect few properties which hold in this case.

 \begin{lemma}
  Assume that $X$ contains exactly one compact $G$-orbit $Y$. Then $\mup(Y)$ is a $K$-orbit in $\liep$ and this $K$-orbit coincides with the set of extreme points of the convex envelope $E$ of $\mup(X)$. If $\sigma$ is an extreme
  point then $\sigma$ exposes $\sigma$ and $R^{\beta_-}$ acts freely on $X^{\sigma_-}_{\max}$.
\end{lemma}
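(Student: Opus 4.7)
The plan is to combine the uniqueness of $Y$ with the fact (Lemma \ref{lemma:orbitclosure}) that every compact $G$-orbit in $Z$ is a $K$-orbit. This immediately gives $Y=K\cdot y$ for any $y\in Y$, and hence $\mup(Y)=K\cdot\mup(y)$ is a single $K$-orbit in $\liep$.

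Next I would identify $\mup(Y)$ with the set of extreme points of $E$ by a double inclusion. Given any extreme point $\sigma$ of $E$, choose $\beta\in C_\sigma$; Proposition \ref{proposition:extreme}(\ref{item:extreme1}) yields $\mup\inv(\sigma)=X^\beta_{\max}\neq\emptyset$, and Corollary \ref{corollary:proposition:extreme} shows that the $G$-orbit through any point of $X^\beta_{\max}$ is compact. By hypothesis such an orbit must lie in $Y$, so $\sigma\in\mup(Y)$. Conversely, since the inner product on $\liep$ is $K$-invariant and $\mup$ is $K$-equivariant, $K$ permutes the extreme points of the $K$-invariant set $E$; hence the single $K$-orbit $\mup(Y)$, already known to contain an extreme point, consists entirely of extreme points.

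For the self-exposure claim, the crucial observation is that all extreme points lie on a common sphere: they form the $K$-orbit $K\cdot\sigma$ and the inner product is $K$-invariant, so each has norm $\|\sigma\|$. By Proposition \ref{proposition:extremanorm}, the maximum of $\eta(x)=\|\mup(x)\|^{2}$ on $X$ is attained and equals the squared norm of some extreme point, namely $\|\sigma\|^{2}$; hence $E$ is contained in the closed ball of radius $\|\sigma\|$ centred at the origin. For any $\alpha\in E$, Cauchy--Schwarz then gives
\[
\langle\sigma,\alpha\rangle\;\le\;\|\sigma\|\,\|\alpha\|\;\le\;\|\sigma\|^{2}\;=\;\langle\sigma,\sigma\rangle,
\]
and equality throughout forces $\alpha=c\sigma$ with $c\ge 0$ together with $\|\alpha\|=\|\sigma\|$, so $\alpha=\sigma$. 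This shows $\sigma\in C_\sigma$.

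Finally, $\sigma$ is trivially fixed by $K^\sigma$, hence $\sigma\in C_\sigma^{K^\sigma}$, and Corollary \ref{corollary:properandfree} applied with $\beta=\sigma$ yields the free (and proper) action of $R^{\sigma_{-}}$ on $X^{\sigma_{-}}_{\max}$. The only non-routine step is the self-exposure argument, where one must combine the $K$-invariance of norms with the equality case in Cauchy--Schwarz; once $\sigma\in C_\sigma$ is established, everything else is immediate bookkeeping using the results already developed.
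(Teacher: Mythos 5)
Your argument is correct and, for the identification of the extreme points with $\mup(Y)$ and for the self-exposure claim, it is essentially the argument the paper intends: the paper's two-line proof delegates everything to Proposition \ref{proposition:extremanorm}, whose own proof contains exactly your ``all extreme points lie on a common sphere, and a supporting hyperplane with normal $\sigma$ meets that sphere only at $\sigma$'' reasoning (your Cauchy--Schwarz formulation is the same computation). Your double-inclusion via Corollary \ref{corollary:proposition:extreme} is a clean way to make precise the step the paper leaves implicit, namely that \emph{every} extreme point is realized on the unique compact orbit and hence has the maximal norm.

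The one point where you genuinely diverge is the freeness of the $R^{\sigma_-}$-action. The paper cites Proposition \ref{proposition:extreme}\,(\ref{item:extreme6}), which by itself only gives that $\xi_X(x)\neq 0$ for nonzero $\xi\in\lie r^{\sigma_-}$ at points $x\in X^{\sigma}_{\max}$, i.e.\ infinitesimal freeness along the base of the fibration; you instead invoke Corollary \ref{corollary:properandfree}, which rests on the full structure theorem \ref{theorem:structuretheorem} and does give freeness on all of $X^{\sigma_-}_{\max}$. Your route is the more complete justification, but it buys this at the price of requiring $X$ to be a smooth $G$-invariant submanifold (the standing hypothesis of Theorem \ref{theorem:structuretheorem} and Corollary \ref{corollary:properandfree}), whereas in Section \ref{section:compactorbits} the set $X$ is a priori only a compact $G$-invariant subset of $Z$. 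You should either add the smoothness hypothesis explicitly or restrict the freeness claim to the setting where the structure theorem applies; with that caveat the proof stands.
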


\begin{proof}
It follows from \ref{proposition:extremanorm} that the set of extreme points of $E$ is a $K$-orbit. The freeness
of the $R^{\beta_-}$-action follows from $\sigma\in C_\sigma^{K^\sigma}$ and \ref{proposition:extreme} (\ref{item:extreme6}).
\end{proof}

Finally we note the following.

\begin{lemma}
 Let $X$ be a compact smooth $G$-invariant submanifold of $X$ with exactly one compact orbit. Then
 $X=G\cdot X^{\beta_-}_{\max}$.
\end{lemma}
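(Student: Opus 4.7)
The plan is to establish the nontrivial inclusion $X \subseteq G \cdot X^{\beta_-}_{\max}$; the opposite inclusion is automatic from $G$-invariance of $X$. The argument will combine three earlier inputs: openness of $X^{\beta_-}_{\max}$ in the smooth case (Corollary \ref{corollary:proposition:extreme}(2)), the general fact that every orbit closure in $X$ contains a compact $G$-orbit (Lemma \ref{lemma:orbitclosure}(1)), and the preceding lemma, which identifies $\mup(Y)$ with the set of extreme points of the convex envelope $E$ of $\mup(X)$, where $Y$ denotes the unique compact $G$-orbit in $X$.

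Fix $x \in X$. First I would observe that $\overline{G \cdot x}$ is compact and $G$-invariant, so Lemma \ref{lemma:orbitclosure}(1) supplies a compact $G$-orbit in its closure, which by uniqueness must be $Y$; hence $Y \subseteq \overline{G \cdot x}$. Next, writing $\sigma$ for the extreme point of $E$ associated with the chosen $\beta \in C_\sigma^{K^\sigma}$, the preceding lemma produces a point $y_0 \in Y$ with $\mup(y_0) = \sigma$. By Proposition \ref{proposition:extreme}(\ref{item:extreme1}) this places $y_0$ in $X^\beta_{\max}$, and since $\exp(t\beta)$ fixes $y_0$ the point $y_0$ lies trivially in $X^{\beta_-}_{\max}$.

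To finish, I would choose a sequence $(g_n) \subset G$ with $g_n \cdot x \to y_0$, which exists since $y_0 \in \overline{G \cdot x}$. Because $X^{\beta_-}_{\max}$ is open in $X$ and contains $y_0$, for all sufficiently large $n$ one has $g_n \cdot x \in X^{\beta_-}_{\max}$, and therefore $x = g_n\inv \cdot (g_n \cdot x) \in G \cdot X^{\beta_-}_{\max}$. No genuine obstacle arises; the only point worth emphasising is that one should not attempt to flow $x$ by $\exp(t\beta)$ directly into $X^{\beta_-}_{\max}$ (that flow need not converge for a generic $x$), but rather exploit the freedom of $G$-translation to bring $x$ into the open neighbourhood of $y_0$ furnished by the structure theorem.
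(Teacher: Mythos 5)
Your proof is correct and uses the same two ingredients as the paper — openness of $X^{\beta_-}_{\max}$ in the smooth case and Lemma \ref{lemma:orbitclosure}(1) — merely phrased directly (every orbit closure contains the unique compact orbit, which meets the open set $X^{\beta_-}_{\max}$) rather than as the paper's contradiction argument applied to the compact invariant complement $X\setminus G\cdot X^{\beta_-}_{\max}$. No gap; the approaches are essentially identical.
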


\begin{proof}
 The set $G\cdot X^{\beta_-}_{\max}$ is open in $X$ and $Y=X \setminus G\cdot X^{\beta_-}_{\max}$ is
 compact. If we assume that it is non empty then Lemma \ref{lemma:orbitclosure} gives a contradiction.
\end{proof}

%%%%%%%%%%%%%%%%%%%%%%%%%%%%%%%%%%%%%%%%%%%%%%%%%%%%%%%%%%%%%%%%
\section{The projective case}
\label{section:projective}
%%%%%%%%%%%%%%%%%%%%%%%%%%%%%%%%%%%%%%%%%%%%%%%%%%%%%%%%%%%%%

In this section we give a proof of the result of Brion-Luna-Vust (\cite{BLV}).
Assume that $G=K^\C$ is a connected complex reductive group with maximal compact subgroup $K$. Let $V$ be a
finite dimensional
unitary representation of $K$ and set $X:=\mathbb{P}(V)$. The action of $K$ is Hamiltonian with
respect to the standard K\"ahler metric on $\mathbb P(V)$ and we have a momentum map $\mu\colon X\to \im\liek$. For a given linear subspace $W$ of $V$ we write in the
following $\mathbb P(W)$ for the image of $W\setminus\{0\}$ in $\mathbb{P}(V)$.

Let $G\cdot x $ be a compact orbit in $\mathbb{P}(V)$. Then there is an irreducible subrepresentation $W$ of
$V$ such that $G\cdot x\subset\mathbb{P}(W)$. Note that $\mathbb{P}(W)$ contains exactly one compact $G$-orbit.
Let $\beta$ be an extreme point of the convex envelope of $\mu(\mathbb{P}(W))$. Note that
$\mathbb{P}(W)^\beta_{\max}$ is an isolated fixed point of $G^\beta$ in $\mathbb{P}(W)$.

For $W$ we have the orthogonal decomposition $W=W_{\beta_0}\oplus W_{\beta_1}\ldots \oplus W_{\beta_m}$ into $\beta$ eigenspaces
with eigenvalues $\beta_j$ where $W_{\beta_0}=W^\beta$ is the line in $W$ which corresponds to $x$.

The Slice Theorem shows that $\mathbb{P}(W)^{\beta_-}_{\max}$ can be identified with a $G^\beta$-representation.
Let $w_0$  be a basis of $ W_{\beta_0}$. For $w=w_0\oplus w_1\oplus \ldots\oplus  w_m\in W\setminus\{0\}\subset
W_{\beta_0}\oplus W_{\beta_1}\ldots \oplus W_{\beta_m}$  we have
for its image $[w]$ in  $\mathbb P(W)$ that
\[
\exp t\beta\cdot [w]=[w_0\oplus e^{\beta_1-\beta_0}w_1\oplus \ldots\oplus  e^{\beta_m-\beta_0}w_m]
\]
holds for all $t\in\R$. Note that $w_0\not= 0$ is a basis of $W_{\beta_0}$ and that $\beta_j-\beta_0<0$ for $j=1,\cdots,m$. This shows that
\[
\mathbb{P}(W)^{\beta_-}_{\max}=[w_0\oplus W_{\beta_1}\ldots \oplus W_{\beta_m}]=\mathbb{P}(W)\setminus\mathbb{P}((W_{\beta_0})^{\perp_W}) .
\]
Since eigenspaces corresponding to different eigenvalues are orthogonal we have the
orthogonal decomposition
\[
W=W_{\beta_0}\oplus  \lie r^{\beta_-}\cdot w_0\oplus (\lieg\cdot w_0)^{\perp_W}
\]

The representation $F$ in Theorem \ref{theorem:structuretheorem} is now given as a subset of $\mathbb{P}(W)$ by
\[
\mathbb{P}(w_0\oplus (\lieg\cdot w_0)^\perp)=\mathbb{P}(W_{\beta_0}\oplus (\lieg\cdot w_0)^\perp) \setminus \mathbb{P}((W_{\beta_0})^\perp) .
\]

Theorem \ref{theorem:structuretheorem} shows the following.

\begin{theorem}
\label{theorem:irreducible}
The  restriction
\[
\Phi\colon R^{\beta_-}\times (\mathbb{P}(W_{\beta_0}\oplus (\lieg\cdot w_0)^{\perp_W}) \setminus \mathbb{P}((\lieg\cdot w_0)^{\perp_W}))\to \mathbb{P}(W)\setminus\mathbb{P}((W_{\beta_0})^{\perp_W})
\]
 of the action map is an isomorphism.
 \end{theorem}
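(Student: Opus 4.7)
The plan is to invoke Theorem~\ref{theorem:structuretheorem} for $X=\mathbb{P}(W)$ at the point $x=[w_0]$ and to make each piece of its statement explicit in terms of the $\beta$-eigenspace decomposition of $W$. The setup already records that $\mathbb{P}(W)^\beta_{\max}=\{x\}$ is an isolated $G^\beta$-fixed point, so by Corollary~\ref{corollary:isolated} the ineffectivity $I^\beta$ equals $G^\beta$ and the open neighbourhood $U$ in Theorem~\ref{theorem:structuretheorem} collapses to $\{x\}$. What remains is to identify the slice $W^{\beta_-}$, the orbit tangent $\lie r^{\beta_-}\cdot w_0$ and a $G^\beta$-equivariant complement $F$ in $W^{\beta_-}$, and then to recognize the map $\Phi$ produced by Theorem~\ref{theorem:structuretheorem} as the projective action map.

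First I would identify the slice. The Slice Theorem at $x$ for the projectivized linear representation provides the affine chart
\[
\Psi\colon W_{\beta_0}^{\perp_W}\;\longrightarrow\;\mathbb{P}(W)\setminus\mathbb{P}(W_{\beta_0}^{\perp_W}),\qquad v\mapsto [w_0+v],
\]
and the explicit computation in the excerpt shows that $\exp(t\beta)$ acts on $W_{\beta_0}^{\perp_W}=W_{\beta_1}\oplus\cdots\oplus W_{\beta_m}$ by the strictly negative weights $\beta_j-\beta_0$. By Proposition~\ref{proposition:slicetheoremforgamma}, $W^{\beta_-}$ is therefore all of $W_{\beta_0}^{\perp_W}$ under $\Psi$, and $\mathbb{P}(W)^{\beta_-}_{\max}=\mathbb{P}(W)\setminus\mathbb{P}(W_{\beta_0}^{\perp_W})$.

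Next I would compute the orbit tangent. Maximality of $\mu^\beta$ at $x$ forces $\lie r^{\beta_+}\cdot w_0=0$, while $G^\beta$-invariance of the line $\C w_0=W_{\beta_0}$ gives $\lieg^\beta\cdot w_0\subset\C w_0$; combining these,
\[
\lieg\cdot w_0\;=\;W_{\beta_0}\oplus\lie r^{\beta_-}\cdot w_0,
\]
with $\lie r^{\beta_-}\cdot w_0\subset W_{\beta_0}^{\perp_W}$ since $\lie r^{\beta_-}$ strictly lowers $\beta$-weights. Taking $K$-invariant orthogonal complements in $W$, the subspace $F:=(\lieg\cdot w_0)^{\perp_W}$ lies in $W_{\beta_0}^{\perp_W}$ and, being $K^\beta$-stable (through the scalar action of $K^\beta$ on $w_0$), is automatically stable under $G^\beta=(K^\beta)^\C$; thus it is a $G^\beta$-equivariant complement of $\lie r^{\beta_-}\cdot w_0$ in $W^{\beta_-}$. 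Under $\Psi$, this $F$ corresponds exactly to $\mathbb{P}(W_{\beta_0}\oplus(\lieg\cdot w_0)^{\perp_W})\setminus\mathbb{P}((\lieg\cdot w_0)^{\perp_W})$.

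With these identifications, Theorem~\ref{theorem:structuretheorem} applied to the splitting $W^{\beta_-}=\lie r^{\beta_-}\cdot w_0\oplus F$ yields a smooth map
\[
R^{\beta_-}\times F\;\longrightarrow\;\mathbb{P}(W)\setminus\mathbb{P}(W_{\beta_0}^{\perp_W}),\qquad (g,v)\mapsto g\cdot\Psi(v)=g\cdot[w_0+v],
\]
which, by Remark~\ref{remark:theorem:structuretheorem:complexcase}, is biholomorphic; this is precisely the restriction of the action map in the statement. The main conceptual step is the identification of $F$ with $(\lieg\cdot w_0)^{\perp_W}$ together with its $G^\beta$-invariance; beyond that, the argument is a direct transcription of Theorem~\ref{theorem:structuretheorem} into projective coordinates.
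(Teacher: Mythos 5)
Your proposal is correct and follows essentially the same route as the paper: identify $\mathbb{P}(W)^{\beta_-}_{\max}$ with the affine chart $v\mapsto[w_0+v]$ on $W_{\beta_0}^{\perp_W}$, use $\lie r^{\beta_+}\cdot w_0=0$ and $\lieg^\beta\cdot w_0\subset W_{\beta_0}$ to get the orthogonal splitting $W_{\beta_0}^{\perp_W}=\lie r^{\beta_-}\cdot w_0\oplus(\lieg\cdot w_0)^{\perp_W}$, take $F=(\lieg\cdot w_0)^{\perp_W}$, and apply Theorem~\ref{theorem:structuretheorem} with $U=\{x\}$ and $I^\beta=G^\beta$. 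Your explicit verification that $F$ is $G^\beta$-stable (via $K^\beta$-stability and connectedness of $G^\beta$) is a detail the paper leaves implicit, but the argument is the same.
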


 In the general case we have a orthogonal decomposition $V=W\oplus U$, $W=W_{\beta_0}\oplus (W_{\beta_0})^{\perp_W}$,
 $W_{\beta_0}^{\perp_V}=W_{\beta_0}^{\perp_W}\oplus U$ and
 $(\lieg\cdot w_0)^{\perp_V}=(\lieg \cdot w_0)^{\perp W}\oplus U$. Note that $\mathbb{P}(U)$ is a $G$-invariant subset
 of $\mathbb P(V)$. The following  is shown in \cite{BLV}.

 \begin{corollary}
  The restriction
  \[
  \Phi\colon R^{\beta_-}\times (\mathbb{P}(W_{\beta_0}\oplus (\lieg\cdot w_0)^{\perp_V}) \setminus \mathbb{P}((W_{\beta_0})^{\perp_V}))\to \mathbb{P}(V)\setminus\mathbb{P}((W_{\beta_0})^{\perp_V}
  \]
  of the action map is an isomorphism.
 \end{corollary}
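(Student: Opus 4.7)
My plan is to reduce this corollary to the irreducible statement of Theorem \ref{theorem:irreducible}. Since $V$ is a unitary $K$-representation and hence a completely reducible $G$-representation, I fix a $G$-invariant orthogonal complement $U$ to $W$ in $V$; then all of $G$, and in particular $R^{\beta_-}$ together with the element $\beta$, preserves the direct-sum decomposition $V=W\oplus U$. As already recorded in the text, the $G$-stability of $W$ forces $\lieg\cdot w_0\subseteq W$, whence
\[
(\lieg\cdot w_0)^{\perp_V}=(\lieg\cdot w_0)^{\perp_W}\oplus U,\qquad (W_{\beta_0})^{\perp_V}=(W_{\beta_0})^{\perp_W}\oplus U.
\]

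The heart of the argument is to pass to an affine chart. Every line not lying in $\mathbb{P}((W_{\beta_0})^{\perp_V})$ has a unique representative whose $W_{\beta_0}$-component equals $w_0$, so $\mathbb{P}(V)\setminus\mathbb{P}((W_{\beta_0})^{\perp_V})$ identifies with the affine space $w_0+(W_{\beta_0})^{\perp_W}\oplus U$, and the slice on the source side identifies with $w_0+(\lieg\cdot w_0)^{\perp_W}\oplus U$. The point to verify is that $R^{\beta_-}$ preserves this chart: since $\lie r^{\beta_-}$ strictly lowers $\beta$-weights on $V$, for any $r\in R^{\beta_-}$ we get $(r-1)w_0\in(W_{\beta_0})^{\perp_W}$ and $rv'\in(W_{\beta_0})^{\perp_W}$ for $v'\in(W_{\beta_0})^{\perp_W}$, while $rv''\in U$ for $v''\in U$ by $G$-invariance of $U$. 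Consequently, in the chart the action map takes the product form
\[
\Phi\colon (r,\,s_W,\,s_U)\;\longmapsto\;(rs_W,\,rs_U),\qquad s_W\in w_0+(\lieg\cdot w_0)^{\perp_W},\ s_U\in U.
\]

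The conclusion is then automatic: Theorem \ref{theorem:irreducible} asserts precisely that $\Phi_W(r,s_W):=rs_W$ is an isomorphism $R^{\beta_-}\times\bigl(w_0+(\lieg\cdot w_0)^{\perp_W}\bigr)\to w_0+(W_{\beta_0})^{\perp_W}$, and the inverse of $\Phi$ is obtained by first recovering $(r,s_W)=\Phi_W^{-1}(v_W)$ from the $W$-component of any point $(v_W,v_U)$ in the target, and then setting $s_U:=r^{-1}v_U$. This inverse is manifestly holomorphic, so $\Phi$ is an isomorphism. The only technical step requiring care, and the place I expect to spend a few extra lines, is the chart-preservation claim in the middle paragraph; it rests on the two facts that $U$ is $G$-invariant and that $\lie r^{\beta_-}$ strictly lowers $\beta$-weights on the entire representation $V$ (not only on $W$), after which the reduction to the irreducible theorem is purely formal.
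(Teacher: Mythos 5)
Your proof is correct and follows essentially the same route as the paper: both arguments reduce to Theorem \ref{theorem:irreducible} via the $W$-component of a point and then recover the $U$-component by applying $r^{-1}$, using the $G$-invariance of $U$. Your affine-chart normalization (fixing the $w_0$-coefficient to $1$, which $R^{\beta_-}$ preserves since $\lie r^{\beta_-}$ strictly lowers $\beta$-weights) is only a presentational difference that cleanly absorbs the projective scalars $\alpha$ and $\lambda$ the paper carries through its computation.
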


\begin{proof}
Let  $z\in \mathbb{P}(W_{\beta_0}\oplus (W_{\beta_0})^{\perp_{W}}\oplus U)\setminus \mathbb{P}((W_{\beta_0})^{\perp_V})$. As above let $w_0$ be a basis of $W_{\beta_0}$. Since $z\notin \mathbb{P}((W_{\beta_0})^{\perp_V})$ we have $[z]=[ w_0+w_1+u]$ with
$w_1\in (\mathbb{C}{w_0})^{\perp_{W}}$ and $u \in U$.
Using the result in the irreducible case  there are  $r\in  R^{\beta_{-}}$, $\alpha\in \C^*$ and
$w\in (W_{\beta_0})^{\perp_{W}}$ such that $r \cdot [(\alpha w_0+w)+u]=[r\cdot (\alpha w_0+w)+r\cdot u]=[w_0+w_1+r\cdot u]$. This shows that $r\cdot [(\alpha w_0+w)+ r\inv\cdot u]=[(w_0+w_1)+ u]=z$.

For injectivity assume that $r\cdot [w_0+a_1+b_1]=[w_0+a_2+b_2]$, where
$a_1, a_2 \in (W_{\beta_0})^{\perp_{W}}$ and $b_1,b_2 \in U$. It follows that $r\cdot (w_0+a_1+b_1)
= \lambda  ( w_0+a_2+b_2) $ with $\lambda \in \mathbb{C}^{*}$. This shows  $r\cdot ( w_0+a_1)= \lambda (w_0+a_2)$ and $r\cdot b_1=  \lambda  b_2 $. Theorem \ref{theorem:irreducible}  implies
$r=e$ and we obtain $[w_0+a_1+b_1]=[\alpha w_0+a_2+b_2]$

\end{proof}

\bibliographystyle{letter}

\begin{thebibliography}{HLM17}

\bibitem{atiyah} M. Atiyah: "Convexity and commuting Hamiltonians". In:
The Bulletin of the London Mathematical Society 14 (1982), 1 -- 15

\bibitem{BGH1} L. Biliotti and A. Ghigi and P. Heinzner:"Coadjoint orbitopes".
In:
Osaka Journal of Mathematics 4 (2014), 935 -- 969

\bibitem{BGH2} L. Biliotti and A. Ghigi and P. Heinzner: "Invariant convex sets in polar representations". In: Israel Journal of Mathematics (2013), 423 -- 441

\bibitem{BLV} M. Brion and D. Luna and Th. Vust: "Espaces homogènes sphériques".
In: Inventiones Mathematicae 84 (1986) 617 -- 632


\bibitem{GS} V. Guillemin and S. Sternberg: "Convexity properties of the moment mapping". In:
Inventiones Mathematicae 67 (1982), 491 -- 513

\bibitem{HMP} P. Heinzner and L. Migliorini and M. Polito: "Semistable quotients". In:
Annali della Scuola Normale Superiore di Pisa - Classe di Scienze 26 (1998), 233 -- 248


\bibitem{HSch} P. Heinzner and P. Sch\"utzdeller: "Convexity properties of gradient maps". In: Advances in Mathematics 225 (2010), 1119 -- 1133

\bibitem{HS} P. Heinzner and G. W. Schwarz: "Cartan decomposition of the moment map". In:
Mathematische Annalen 337 (2007), 197 -- 232

\bibitem{HSS}
P. Heinzner and G. Schwarz and H. St\"{o}tzel: "Stratifications with respect to actions of real reductive groups". In: Compositio Mathematica 144 (2008), 163 -- 185

\bibitem{W} J. A. Wolf: "The action of a real semisimple group on a complex flag manifold". In: Orbit structure and holomorphic arc components". In: Bull. Amer. Math. Soc.75 (1969), 1121 -- 1237


\end{thebibliography}

%%%%%%%%%%%%%%%%%%%%%%%%%%%%%%%%%%%%%%%%%%%%%%%%%%%%%%%%%%%%%%%%%%
%\printbibliography
%%%%%%%%%%%%%%%%%%%%%%%%%%%%%%%%%%%%%%%%%%%%%%%%%%%%%%%%%%%%%%%%%

\end{document}